\newtheorem{theorem}{Theorem}
\theoremstyle{plain}
\newtheorem{corollary}{Corollary}
\newtheorem{definition}{Definition}
\newtheorem{example}{Example}
\newtheorem{lemma}{Lemma}
\newtheorem{proposition}{Proposition}
\newtheorem{remark}{Remark}
\numberwithin{equation}{section}
\begin{document}
\title[]{Partial $b_{v}\left( s\right) $ and $b_{v}\left( \theta \right) $
metric spaces and related fixed point theorems}
\author{Ibrahim Karahan}
\address{Department of Mathematics, Faculty of Science, Erzurum Technical
University, Erzurum, 25700, Turkey.}
\email{ibrahimkarahan@erzurum.edu.tr}
\author{Irfan Isik}
\address{Department of Mathematics, Faculty of Science, Erzurum Technical
University, Erzurum, 25700, Turkey.}
\email{iirfansk25@gmail.com}
\subjclass{47H10; 54H25; 55M20}
\keywords{partial $b_{v}\left( s\right) $ metric space, $b_{v}\left( \theta
\right) $ metric space, generalized metric space, Banach fixed point
theorem, Reich fixed point theorem, Kannan fixed point theorems, weakly
contractive mappings}

\begin{abstract}
In this paper, we introduced two new generalized metric spaces called
partial $b_{v}\left( s\right) $ and $b_{v}\left( \theta \right) $ metric
spaces which extend $b_{v}\left( s\right) $ metric space, $b$-metric space,
rectangular metric space, $v$-generalized metric space, partial metric
space, partial $b$-metric space, partial rectangular $b$-metric space and so
on. We proved some famous theorems such as Banach, Kannan and Reich fixed
point theorems in these spaces. Also, we give definition of partial $v$%
-generalized metric space and show that these fixed point theorems are valid
in this space. We also give numerical examples to support our definitions.
Our results generalize several corresponding results in literature.
\end{abstract}

\maketitle

\section{Introduction and Preliminaries}

Metric space was introduced by Maurice Fr\'{e}chet \cite{metric} in 1906.
Since a metric induces topological properties, it has very large application
area in mathematics, especially in fixed point theory. Generalizing of
notions is in the nature of mathematics. So, after the notion of metric
space, many different type generalized metric spaces were introduced by many
researchers. In 1989, Bakhtin introduced the notion of $b$-metric spaces by
adding a multiplier to triangle ineuality. In 1994, Matthews \cite{mat}
introduced the notion of partial metric spaces. In this kind of spaces,
self-distance of any point need not to be zero. This space is used in the
study of denotational semantics of dataflow network. In 2000, Branciari \cite%
{bri} introduced rectangular metric space by adding four points instead of
three points in triangle inequality. These three spaces are the basis of
other generalized metric spaces. After all these spaces, $v$-generalized
metric space \cite{bri}, rectangular $b$-metric spaces \cite{geo}, $%
b_{v}\left( s\right) $ metric space \cite{mr}, partial $b$-metric space \cite%
{Shukla} and partial rectangular $b$-metric space \cite{shukla-2} were
introduced in recent years. Below, we give definitions of some generalized
metric spaces.

\begin{definition}
\cite{Bakhtin} Let $E$ be a nonempty set and $\rho :E\times E\rightarrow
\lbrack 0,\infty )$ a function. $\left( E,\rho \right) $ is called $b$%
-metric space if there exists a real number $s\geq 1$ such that following
conditions hold for all $u,w,v\in E$:

\begin{enumerate}
\item[(1)] $\rho (u,w)=0$ iff $u=w$;

\item[(2)] $\rho (u,w)=\rho (w,u)$;

\item[(3)] $\rho (u,w)\leq s[\rho (u,v)+\rho (v,w)].$
\end{enumerate}
\end{definition}

Clearly a $b$-metric space with $s=1$ is exactly a usual metric space.

\begin{definition}
\cite{mat} Let $E$ be a nonempty set and $\rho :E\times E\rightarrow \lbrack
0,\infty )$ a mapping. $\left( E,\rho \right) $ is called partial metric
space if following conditions hold for all $u,w,v\in E$:

\begin{enumerate}
\item $u=w$ iff $\rho (u,u)=\rho (u,w)=\rho (w,w)$;

\item $\rho (u,u)\leq \rho (u,w);$

\item $\rho (u,w)=\rho (w,u);$

\item $\rho (u,w)\leq \rho (u,v)+\rho (v,w)-\rho (v,v)$.
\end{enumerate}
\end{definition}

It is clear that every metric space is also a partial metric spaces.

\begin{definition}
\cite{bri} Let $E$ be a nonempty set and let $\rho :$ $E\times E\rightarrow
\lbrack 0,\infty )$ be a mapping. $(E,\rho )$ is called a rectangular metric
space if following conditions hold for all $u,w\in E$ and for all distinct
points $c,d\in E\setminus \left\{ u,w\right\} $:

\begin{enumerate}
\item $\rho (u,w)=0$ iff $u=w$;

\item $\rho (u,w)=\rho (w,u)$;

\item $\rho (u,w)\leq \rho (u,c)+\rho (c,d)+\rho (d,w)$.
\end{enumerate}
\end{definition}

\begin{definition}
\cite{Shukla} Let $E$ be a nonempty set and mapping $\rho :E\times
E\rightarrow \lbrack 0,\infty )$ a mapping. $\left( E,\rho \right) $ is
called partial $b$-metric space if there exists a real number $s\geq 1$ such
that following conditions hold for all $u,w,v\in E$:

\begin{enumerate}
\item $u=w$ iff $\rho (u,u)=\rho (u,w)=\rho (w,w)$;

\item $\rho (u,u)\leq \rho (u,w);$

\item $\rho (u,w)=\rho (w,u);$

\item $\rho (u,w)\leq s[\rho (u,v)+\rho (v,w)]-\rho (v,v)$.
\end{enumerate}
\end{definition}

\begin{remark}
\cite{Shukla} It is clear that every partial metric space is a partial $b$%
-metric space with coefficient $s=1$ and every $b$-metric space is a partial 
$b$-metric space with the same coefficient and zero self-distance. However,
the converse of this fact need not hold.
\end{remark}

In 2017, Mitrovic and Radenovic introduced following generalized metric
space which is referred to as $b_{v}(s)$ metric space. Under the suitable
assumptions, this kind of spaces can be reduced to the other spaces.

\begin{definition}
\cite{mr} Let $E$ be a nonempty set, $\rho :$ $E\times E\rightarrow \lbrack
0,\infty )$ a mapping and $v\in 
%TCIMACRO{\U{2115} }%
%BeginExpansion
\mathbb{N}
%EndExpansion
$. Then $(E,\rho )$ is said to be a $b_{v}(s)$ metric space if there exists
a real number $s\geq 1$ such that following conditions hold for all $u,w\in
E $ and for all distinct points $z_{1},z_{2},\ldots ,z_{v}\in E\setminus
\left\{ u,w\right\} $:

\begin{enumerate}
\item[1.] $\rho (u,w)=0$ iff $u=w$;

\item[2.] $\rho (u,w)=\rho (w,u)$;

\item[3.] $\rho (u,w)\leq s[\rho (u,z_{1})+\rho (z_{1},z_{2})+\cdots +\rho
\left( z_{v},w\right) ].$
\end{enumerate}
\end{definition}

This metric space can be reduced to $v$-generalized metric space by taking $%
s=1$, rectangular metric space by taking $v=2$ and $s=1$, rectangular $b$%
-metric space by taking $v=2$, $b$-metric space by taking $v=1$ and usual
metric space by taking $v=s=1.$

\section{Main Results}

In this part, motivated and inspired by mentioned studies, we introduce $%
b_{v}\left( \theta \right) $ (or extended $b_{v}(s)$) metric space and
partial $b_{v}(s)$ metric space. Also we give some fixed point theorems in
these spaces.

First we introduce partial $b_{v}\left( s\right) $ metric space and give
some properties of it.

\subsection{Partial $b_{v}\left( s\right) $ Metric Spaces}

\begin{definition}
\label{Def}Let $E$ be a nonempty set and $\rho :$ $E\times E\rightarrow
\lbrack 0,\infty )$ be a mapping and $v\in 
%TCIMACRO{\U{2115} }%
%BeginExpansion
\mathbb{N}
%EndExpansion
$. Then $(E,\rho )$ is said to be a partial $b_{v}(s)$ metric space if there
exists a real number $s\geq 1$ such that following conditions hold for all $%
u,w,z_{1},z_{2},\ldots ,z_{v}\in E$:
\end{definition}

\begin{enumerate}
\item $u=w\Leftrightarrow \rho (u,u)=\rho (u,w)=\rho (w,w)$;

\item $\rho (u,u)\leq \rho (u,w)$;

\item $\rho (u,w)=\rho (w,u)$;

\item $\rho (u,w)\leq s[\rho (u,z_{1})+\rho (z_{1},z_{2})+\ldots +\rho
(z_{v-1},z_{v})+\rho (z_{v},y)]-\sum_{i=1}^{v}\rho (z_{i},z_{i})$.
\end{enumerate}

It is easy to see that every $b_{v}(s)$\ metric space is a partial $b_{v}(s)$%
\ metric space. However, the converse is not true in general.

\begin{remark}
In Definition \ref{Def};

\begin{enumerate}
\item if we take $v=2$, then we derive partial rectangular $b$-metric space.

\item if we take $v=1,$\ then we derive partial $b$-metric space.

\item if we take $v=s=1$, then we derive partial metric space.
\end{enumerate}
\end{remark}

\begin{remark}
Let $(E,\rho )$ be a partial $b_{v}(s)$ metric space, if $\rho (u,w)=0$, for 
$u,w\in E$, then $u=w$.
\end{remark}

\begin{proof}
Let $\rho (u,w)=0$ for $u,w\in E$. From the second condition of partial $%
b_{v}(s)$ metric space, since $\rho (u,u)\leq \rho (u,w)=0$, we have $\rho
(u,u)=0$. Similarly, we have $\rho (w,w)=0$. So, we get $\rho (u,w)=\rho
(u,u)=\rho (w,w)=0$. It follows from the first condition that $u=w$.
\end{proof}

\begin{proposition}
Let $E$\ be a nonempty set such that $d_{1}$\ is a partial metric and $d_{2}$%
\ is a $b_{v}(s)$\ metric on $E$. Then $\left( E,\rho \right) $ is a partial 
$b_{v}(s)$\ metric space where $\rho :E\times E\rightarrow \lbrack 0,\infty
) $\ is a mapping defined by $\rho (u,w)=p(u,w)+d(u,w)$\ for all $u,w\in E.$
\end{proposition}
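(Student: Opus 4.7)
The plan is to verify the four axioms of partial $b_v(s)$ metric space directly, one by one, using the corresponding axioms for the partial metric $d_1$ and the $b_v(s)$ metric $d_2$. The key identity that will make the bookkeeping work is that $d_2(x,x)=0$ for every $x\in E$, so that $\rho(z,z)=d_1(z,z)+d_2(z,z)=d_1(z,z)$; this is what eventually allows the penalty term $\sum \rho(z_i,z_i)$ in the generalized triangle inequality to match up exactly with what drops out of $d_1$.

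First I would handle axioms (2) and (3): symmetry is immediate since both $d_1$ and $d_2$ are symmetric, and the self-distance inequality $\rho(u,u)\le\rho(u,w)$ follows by adding $d_1(u,u)\le d_1(u,w)$ (from the partial metric) to $0=d_2(u,u)\le d_2(u,w)$ (from the $b_v(s)$ metric). For axiom (1), the forward direction is trivial; for the converse, from $\rho(u,u)=\rho(u,w)=\rho(w,w)$ combined with axiom (2) on each component metric, I would deduce $d_2(u,w)=0$ (since the partial metric parts satisfy $d_1(u,u)\le d_1(u,w)$ and $d_1(w,w)\le d_1(u,w)$, forcing the $d_2$ slack to vanish), and then the $b_v(s)$ axiom (1) for $d_2$ gives $u=w$.

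The main work is axiom (4), the extended triangle inequality. For the $d_1$-part I would first establish, by induction on $v$, the iterated partial-metric inequality
\[
d_1(u,w)\le d_1(u,z_1)+d_1(z_1,z_2)+\cdots+d_1(z_v,w)-\sum_{i=1}^{v}d_1(z_i,z_i),
\]
using the usual partial triangle $d_1(x,y)\le d_1(x,z)+d_1(z,y)-d_1(z,z)$ at each step. For the $d_2$-part I would invoke the $b_v(s)$ axiom to obtain
\[
d_2(u,w)\le s\bigl[d_2(u,z_1)+d_2(z_1,z_2)+\cdots+d_2(z_v,w)\bigr].
\]
Adding these two, using $s\ge 1$ to absorb the factor in front of the $d_1$ sum, and then invoking $\rho(z_i,z_i)=d_1(z_i,z_i)$ to identify the two penalty terms yields exactly the required inequality for $\rho$.

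The main potential obstacle is a mismatch between the hypotheses on $z_1,\dots,z_v$: the $b_v(s)$ axiom applies only when the points are \emph{distinct} and lie in $E\setminus\{u,w\}$, whereas Definition \ref{Def} quantifies axiom (4) over all tuples in $E$. I would handle this in the natural way consistent with the paper's convention — interpreting the partial $b_v(s)$ inequality under the same distinctness restriction as in the $b_v(s)$ case — so that the $b_v(s)$ axiom for $d_2$ applies directly, and the partial metric inequality, which has no distinctness requirement, is available without change.
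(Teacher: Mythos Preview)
Your proposal is correct and follows essentially the same approach as the paper: both use the iterated partial-metric inequality for $d_1$, the $b_v(s)$ inequality for $d_2$, add them, exploit $s\ge 1$ and $\rho(z_i,z_i)=d_1(z_i,z_i)$ to obtain axiom~(4). Your treatment is in fact more careful than the paper's, which dismisses axioms (1)--(3) as ``clear'' and does not address the distinctness issue you flagged; your slightly different bookkeeping in axiom~(4) (bounding $D_1\le sD_1$ directly rather than $D_1-\sum d_1(z_i,z_i)\le s(D_1-\sum d_1(z_i,z_i))$) is a cosmetic variation that avoids needing the nonnegativity of the penalized sum.
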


\begin{proof}
Let $(E,d_{1})$ be a partial metric space and $(E,d_{2})$ be a $b_{v}(s)$
metric space.Then it is clear that first three conditions of the partial $%
b_{v}(s)$ metric space are satisfied for the function $\rho $. Let $%
u,w,z_{1},z_{2},\ldots ,z_{v}\in E$ be arbitrary points. Then, we have%
\begin{eqnarray*}
\rho (u,w) &=&d_{1}(u,w)+d_{2}(u,w) \\
&\leq &d_{1}(u,z_{1})+d_{1}(z_{1},z_{2})+\ldots
+d_{1}(z_{v},w)-\sum_{i=1}^{v}d_{1}(z_{i},z_{i}) \\
&&+s\left[ d_{2}(u,z_{1})+d_{2}(z_{1},z_{2})+\ldots +d_{2}(z_{v},w)\right] \\
&\leq &s\left[ d_{1}(u,z_{1})+d_{1}(z_{1},z_{2})+\ldots
+d_{1}(z_{v},w)-\sum_{i=1}^{v}d_{1}(z_{i},z_{i})\right. \\
&&\left. +d_{2}(u,z_{1})+d_{2}(z_{1},z_{2})+\ldots +d_{2}(z_{v},w)\right] \\
&=&s\left[ \rho (u,z_{1})+\rho (z_{1},z_{2})+\ldots +\rho
(z_{v},w)-\sum_{i=1}^{v}\rho (z_{i},z_{i})\right] \\
&\leq &s\left[ \rho (u,z_{1})+\rho (z_{1},z_{2})+\ldots +\rho (z_{v},w)%
\right] -\sum_{i=1}^{v}\rho (z_{i},z_{i})\text{.}
\end{eqnarray*}

So, $(E,\rho )$ is a partial $b_{v}(s)$ metric space.
\end{proof}

Now, we give definitions of convergent sequence, Cauchy sequence and
complete partial $b_{v}(s)$ metric space by the following way.

\begin{definition}
Let $(E,\rho )$ be a partial $b_{v}(s)$ metric space and let $\left\{
u_{n}\right\} $ be any sequence in $E$ and $u\in E$. Then:
\end{definition}

\begin{enumerate}
\item The sequence $\left\{ u_{n}\right\} $ is said to be convergent and
converges to $u$, if $\lim_{n\rightarrow \infty }\rho (u_{n},u)=\rho (u,u)$.

\item The sequence $\left\{ u_{n}\right\} $ is said to be Cauchy sequence in 
$(E,\rho )$ if $\lim_{n,m\rightarrow \infty }\rho (u_{n},u_{m})$ exists and
is finite.

\item $(E,\rho )$ is said to be a complete partial $b_{v}(s)$ metric space
if for every Cauchy sequence $\left\{ u_{n}\right\} $ in $E$ there exists $%
u\in E$ such that%
\begin{equation*}
\lim_{n,m\rightarrow \infty }\rho (u_{n},u_{m})=\lim_{n\rightarrow \infty
}\rho (u_{n},u)=\rho (u,u)\text{.}
\end{equation*}%
Note that the limit of a convergent sequence may not be unique in a partial $%
b_{v}(s)$ metric space.
\end{enumerate}

Now we give an analogue of Banach contraction principle. Our proof is very
different from the original proof of Banach contraction principle in usual
metric space.

\begin{theorem}
\label{A}Let $(E,\rho )$ be a complete partial $b_{v}\left( s\right) $
metric space and $S:E\rightarrow E$ be a contraction mapping, i.e., $S$
satisfies%
\begin{equation}
\rho (Su,Sw)\leq \lambda \rho (u,w)  \label{1}
\end{equation}%
for all $u,w\in E,$ where $\lambda \in \lbrack 0,1)$. Then $S$ has a unique
fixed point $b\in S$ and $\rho (b,b)=0$.
\end{theorem}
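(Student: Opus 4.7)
The plan is to mimic Picard iteration in the partial $b_{v}(s)$ setting, carefully handling the $(v+1)$-term triangle inequality and the possibly nonzero self-distances. Pick any $u_{0}\in E$ and set $u_{n+1}=Su_{n}$. If $u_{n}=u_{n+1}$ at some stage then $u_{n}$ is already a fixed point, so assume all iterates are distinct. A one-line induction from the contraction \eqref{1} gives $\rho(u_{n},u_{n+1})\leq \lambda^{n}\rho(u_{0},u_{1})$, and axiom~(2) of Definition~\ref{Def} yields $\rho(u_{n},u_{n})\leq \rho(u_{n},u_{n+1})$, so both $\rho(u_{n},u_{n+1})$ and $\rho(u_{n},u_{n})$ tend to $0$ geometrically.

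Next I would prove that $\{u_{n}\}$ is Cauchy with $\lim_{n,m\to \infty }\rho(u_{n},u_{m})=0$. The starting point is axiom~(4) with intermediate points $u_{n+1},\ldots ,u_{n+v}$:
\begin{equation*}
\rho(u_{n},u_{m})\leq s\bigl[\rho(u_{n},u_{n+1})+\cdots +\rho(u_{n+v-1},u_{n+v})+\rho(u_{n+v},u_{m})\bigr]-\sum_{i=1}^{v}\rho(u_{n+i},u_{n+i}).
\end{equation*}
The first $v$ summands are $O(\lambda ^{n})$ by the geometric decay above, and $\rho(u_{n+v},u_{m})\leq \lambda ^{n+v}\rho(u_{0},u_{m-n-v})$ by iterating the contraction. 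Combining this with a further expansion of $\rho(u_{0},u_{m-n-v})$ via axiom~(4) and iterating should produce a bound of the form $\rho(u_{n},u_{m})\leq \varepsilon _{n}$ with $\varepsilon _{n}\to 0$ independently of $m$.

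By completeness there is $b\in E$ with $\lim_{n}\rho(u_{n},b)=\rho(b,b)=\lim_{n,m}\rho(u_{n},u_{m})=0$, so in particular $\rho(b,b)=0$. To verify $Sb=b$, apply axiom~(4) again to the pair $(b,Sb)$ with the same intermediate points:
\begin{equation*}
\rho(b,Sb)\leq s\bigl[\rho(b,u_{n+1})+\rho(u_{n+1},u_{n+2})+\cdots +\rho(u_{n+v-1},u_{n+v})+\rho(u_{n+v},Sb)\bigr].
\end{equation*}
Each summand tends to $0$: the first because $u_{n}\to b$, the middle ones by the geometric decay, and $\rho(u_{n+v},Sb)=\rho(Su_{n+v-1},Sb)\leq \lambda \,\rho(u_{n+v-1},b)\to 0$. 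Thus $\rho(b,Sb)=0$, which together with $\rho(b,b)=\rho(Sb,Sb)=0$ (the latter from $\rho(Sb,Sb)\leq \lambda \rho(b,b)$) and axiom~(1) gives $b=Sb$. Uniqueness is immediate: any other fixed point $b'$ satisfies $\rho(b,b')=\rho(Sb,Sb')\leq \lambda \rho(b,b')$, forcing $\rho(b,b')=0$, hence $b=b'$.

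The main obstacle is the Cauchy step: the multiplier $s\geq 1$ and the $(v+1)$-term triangle inequality together tend to inflate any naive bound on $\rho(u_{n},u_{m})$ in terms of consecutive distances $\rho(u_{k},u_{k+1})$, and the nonzero self-distances must be absorbed rather than discarded. Working in the partial setting is actually an asset here, because axiom~(4) holds for \emph{all} choices of $z_{1},\ldots ,z_{v}\in E$ rather than only distinct ones, so we may freely reuse later Picard iterates as intermediates in every estimate, sidestepping the distinctness bookkeeping that complicates the classical $b_{v}(s)$ argument.
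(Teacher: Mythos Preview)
Your treatment of the fixed-point verification ($\rho(b,Sb)=0$) and of uniqueness matches the paper's, but the Cauchy step contains a genuine gap. After the first application of condition~(4) you reach
\[
\rho(u_n,u_m)\le s\Bigl[\sum_{i=0}^{v-1}\rho(u_{n+i},u_{n+i+1})+\rho(u_{n+v},u_m)\Bigr],
\]
bound the tail by $\rho(u_{n+v},u_m)\le\lambda^{n+v}\rho(u_0,u_{m-n-v})$, and then propose to ``expand $\rho(u_0,u_{m-n-v})$ via axiom~(4) and iterate.'' This is precisely where the argument fails: each further application of condition~(4) introduces a new factor $s\ge 1$, so the recursion one obtains, $\rho(u_0,u_k)\le sC+s\lambda^{v}\rho(u_0,u_{k-v})$, gives a uniform bound only when $s\lambda^{v}<1$, a hypothesis the theorem does \emph{not} impose (any $\lambda\in[0,1)$ and any $s\ge1$ are allowed). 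Your closing observation that in the partial setting the intermediates $z_1,\dots,z_v$ need not be distinct is correct but does not touch this issue; the inflation by $s$ is the real obstruction, and freedom to repeat points does nothing to tame it.

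The paper avoids the problem by a ball-invariance trick: it fixes $n_0$ with $\lambda^{n_0}$ small relative to $\varepsilon$ and $s$, sets $G=S^{n_0}$, and shows that $G$ maps a closed ball $B_\rho[u_\ell,\varepsilon/2]=\{w:\rho(u_\ell,w)\le \varepsilon/2+\rho(u_\ell,u_\ell)\}$ into itself; since $u_\ell$ lies in the ball, all later iterates stay there and the sequence is Cauchy without ever iterating condition~(4). An alternative route, used in the paper's Theorem~\ref{C} and in Mitrovi\'c--Radenovi\'c, is to pick $n_0$ with $s\lambda^{n_0}<1$, take intermediates $u_{n+1},\dots,u_{n+v-2},u_{n+n_0},u_{m+n_0}$ in condition~(4), and exploit $\rho(u_{n+n_0},u_{m+n_0})\le\lambda^{n_0}\rho(u_n,u_m)$ to absorb the dominant term into the left-hand side, yielding $(1-s\lambda^{n_0})\rho(u_n,u_m)\le(\text{terms}\to 0)$. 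Either device supplies the idea your sketch is missing.
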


\begin{proof}
Let $G=S^{n_{0}}$ and define a sequence $\left\{ u_{n}\right\} $ by $%
Gu_{n}=u_{n+1}$ for all $n\in 
%TCIMACRO{\U{2115} }%
%BeginExpansion
\mathbb{N}
%EndExpansion
$ and arbitrary point $u_{0}\in E$. Since $\lambda \in \lbrack 0,1)$ and $%
\lim_{n\rightarrow \infty }\lambda ^{n}=0$, there exists a natural number $%
n_{0}$ such that $\lambda ^{n_{0}}<\frac{\varepsilon }{4s}$\ for given $%
0<\varepsilon <1.$ Then, for all $u,w\in E$ we get%
\begin{equation}
\rho (Gu,Gw)=\rho (S^{n_{0}}u,S^{n_{0}}w)\leq \lambda ^{n_{0}}\rho (u,w)%
\text{.}  \label{2}
\end{equation}%
So, we have%
\begin{equation*}
\rho (u_{k+1},u_{k})=\rho (Gu_{k},Gu_{k-1})\leq \lambda ^{n_{0}}\rho
(u_{k},u_{k-1})\leq \lambda ^{kn_{0}}\rho (u_{1},u_{0})\rightarrow 0\text{,}
\end{equation*}%
as $k\rightarrow \infty $. Hence, there exists a $l\in 
%TCIMACRO{\U{2115} }%
%BeginExpansion
\mathbb{N}
%EndExpansion
$ such that%
\begin{equation*}
\rho (u_{l+1},u_{l})<\frac{\varepsilon }{4s}\text{.}
\end{equation*}%
Now, let%
\begin{equation*}
B_{\rho }[u_{l},\varepsilon /2]:=\left\{ w\in E:\rho (u_{l},w)\leq \frac{%
\varepsilon }{2}+\rho (u_{l},u_{l})\right\} \text{.}
\end{equation*}%
We need to prove that $G$ maps the set $B_{\rho }[u_{l},\varepsilon /2]$
into itself. Since $u_{l}\in B_{\rho }[u_{l},\varepsilon /2]$, it is a
nonempty set. Let $z$ be an arbitrary point in $B_{\rho }[u_{l},\varepsilon
/2]$. Then, using (\ref{2}) we get%
\begin{eqnarray*}
\rho (Gz,u_{l}) &\leq &s\left[ \rho (Gz,Gu_{l})+\rho
(Gu_{l}+Gu_{l+1})+\ldots +\rho (Gu_{l+v-2},Gu_{l+v-1})\right.  \\
&&\left. +\rho (Gu_{l+v-1},u_{l})\right] -\sum\limits_{i=0}^{v-1}\rho
(Gu_{l+i},Gu_{l+i}) \\
&\leq &s\left[ \rho (Gz,Gu_{l})+\rho (Gu_{l}+Gu_{l+1})+\ldots +\rho
(Gu_{l+v-2},Gu_{l+v-1})+\rho (Gu_{l+v-1},u_{l})\right]  \\
&\leq &s[\lambda ^{n_{0}}(\frac{\varepsilon }{2}+\rho (u_{l},u_{l}))+\rho
(u_{l+1},u_{l+2})+\ldots +\rho (u_{l+v-1},u_{l+v})+\rho (u_{l+v},u_{l}) \\
&\leq &s\left\{ \lambda ^{n_{0}}(\frac{\varepsilon }{2}+\rho
(u_{l},u_{l}))+\rho (u_{l+1},u_{l+2})+\ldots +\rho
(u_{l+v-1},u_{l+v})+s[\rho (u_{l},u_{l+1})\right.  \\
&&\left. +\rho (u_{l+1},u_{l+2})+\ldots +\rho (u_{l+v-1},u_{l+v})+\rho
(u_{l+v},u_{l+v})]-\sum\limits_{i=1}^{v}\rho (u_{l+i},u_{l+i})\right\}  \\
&\leq &s\left\{ \lambda ^{n_{0}}(\frac{\varepsilon }{2}+\rho
(u_{l},u_{l}))+\left( s+1\right) \rho (u_{l},u_{l+1})+(s+1)\rho
(u_{l+1},u_{l+2})+\right.  \\
&&\left. (s+1)\rho (u_{l+2},u_{l+3})+\ldots +(s+1)\rho
(u_{l+v-1},u_{l+v})+s\rho (u_{l+v},u_{l+v})\right\}  \\
&\leq &s\left\{ \lambda ^{n_{0}}(\frac{\varepsilon }{2}+\rho
(u_{l},u_{l}))+(s+1)\rho (u_{l},u_{l+1})+(s+1)\rho (u_{l+1},u_{l+2})+\right. 
\\
&&\left. (s+1)\rho (u_{l+2},u_{l+3})+\ldots +(s+1)\rho
(u_{l+v-1},u_{l+v})+s\lambda ^{vn_{0}}\rho (u_{l},u_{l})\right\}  \\
&=&\rho (u_{l},u_{l})\left[ s\lambda ^{n_{0}}+s^{2}\lambda ^{vn_{0}}\right]
+s\lambda ^{n_{0}}\frac{\varepsilon }{2}+s^{2}\rho (u_{l},u_{l+1})+ \\
&&s(s+1)\left[ \rho (u_{l+1},u_{l+2})+\ldots +\rho (u_{l+v-1},u_{l+v})\right]
\end{eqnarray*}%
Since $\lambda ^{n_{0}}<\frac{\varepsilon }{4s}$ and $\rho
(u_{l},u_{l+1})\leq \frac{\varepsilon }{4v(s^{2}+s)},$ we have%
\begin{eqnarray*}
\rho (Gz,u_{l}) &\leq &\rho (u_{l},u_{l})\left[ s\frac{\varepsilon }{4s}%
+s^{2}\frac{\varepsilon ^{v}}{(4s)^{v}}\right] +s\frac{\varepsilon }{4s}%
\frac{\varepsilon }{2}+ \\
&&(s^{2}+s)\left[ \rho (u_{l},u_{l+1})+\rho (u_{l+1},u_{l+2})+\ldots +\rho
(u_{l+v-1},u_{l+v})\right]  \\
&\leq &\rho (u_{l},u_{l})+\frac{\varepsilon }{4}+(s^{2}+s)v\frac{\varepsilon 
}{4v(s^{2}+s)} \\
&=&\frac{\varepsilon }{2}+\rho (u_{l},u_{l})\text{.}
\end{eqnarray*}%
So, $Gz\in B_{\rho }[u_{l},\varepsilon /2]$. Therefore, $G$ maps $B_{\rho
}[u_{l},\varepsilon /2]$ into itself. Since $u_{l}\in B_{\rho
}[u_{l},\varepsilon /2]$ and $Gu_{l}\in B_{\rho }[u_{l},\varepsilon /2]$, we
obtain that $G^{n}u_{l}\in B_{\rho }[u_{l},\varepsilon /2]$ for all $n\in 
%TCIMACRO{\U{2115} }%
%BeginExpansion
\mathbb{N}
%EndExpansion
$, that is, $u_{m}\in B_{\rho }[u_{l},\varepsilon /2]$ for all $m\geq l$. On
the other hand, from definition of partial $b_{v}\left( s\right) $ metric
space, since $\rho (u_{l},u_{l})\leq \rho (u_{l},u_{l+1})<\frac{\varepsilon 
}{4v(s^{2}+s)}<\frac{\varepsilon }{2}$, we have%
\begin{equation*}
\rho (u_{n},u_{m})<\frac{\varepsilon }{2}+\rho (u_{l},u_{l})<\varepsilon 
\end{equation*}%
for all $n,m>l$. This means that the sequence $\left\{ u_{n}\right\} $ is a
Cauchy sequence. Completeness of $E$ implies that there exists $b\in E$ such
that%
\begin{equation}
\lim_{n\rightarrow \infty }\rho (u_{n},b)=\lim_{n,m\rightarrow \infty }\rho
(u_{n},u_{m})=\rho (b,b)=0\text{.}  \label{3}
\end{equation}

Now, we need to show that, $b$ is a fixed point of $S$. For any $n\in 
%TCIMACRO{\U{2115} }%
%BeginExpansion
\mathbb{N}
%EndExpansion
$ we get%
\begin{eqnarray*}
\rho (b,Sb) &\leq &s\left[ \rho (b,u_{n+1})+\rho (u_{n+1},u_{n+2})+\ldots
+\rho (u_{n+v-1},u_{n+v})+\right.  \\
&&\left. \rho (u_{n+v},Sb)\right] -\sum\limits_{i=1}^{v}\rho
(u_{n+i},u_{n+i}) \\
&\leq &s\left[ \rho (b,u_{n+1})+\rho (u_{n+1},u_{n+2})+\ldots +\rho
(u_{n+v-1},u_{n+v})+\rho (u_{n+v},Sb)\right]  \\
&\leq &s\left[ \rho (b,u_{n+1})+\rho (u_{n+1},u_{n+2})+\ldots +\rho
(u_{n+v-1},u_{n+v})+\rho (Su_{n+v-1},Sb)\right]  \\
&\leq &s\left[ \rho (b,u_{n+1})+\rho (u_{n+1},u_{n+2})+\ldots +\rho
(u_{n+v-1},u_{n+v})+\lambda \rho (u_{n+v-1},b)\right] .
\end{eqnarray*}%
So, it follows from (\ref{3}) that $\rho (b,Sb)=0$. So, $b$ is a fixed point
of $S$.

Now, we show that $S$ has a unique fixed point. Let $a,b\in E$ be two
distinct fixed points of $S$, that is, $Sa=a$, $Sb=b$. Then, contractivity
of mapping $S$ implies that%
\begin{equation*}
\rho (a,b)=\rho (Sa,Sb)\leq \lambda \rho (a,b)<\rho (a,b)\text{,}
\end{equation*}%
which is a contradiction. So, it folllows that $\rho (a,b)=0$, that is, $a=b$%
. Moreover, for a fixed point $a$, let assume that $\rho (a,a)>0$. Then we
get $\rho (a,a)=\rho (Sa,Sa)\leq \lambda \rho (a,a)<\rho (a,a)$ which is a
contradiction. So, we have $\rho (a,a)=0$.
\end{proof}

Now, we prove an analogue of Kannan fixed point theorem.

\begin{theorem}
\label{B}Let $(E,\rho )$ be a complete partial $b_{v}\left( s\right) $
metric space and $S:E\rightarrow E$ a mapping satisfying the following
condition:%
\begin{equation}
\rho (Su,Sy)\leq \lambda \left[ \rho (u,Su)+\rho (w,Sw)\right]   \label{4}
\end{equation}%
for all $u,w\in E$, where $\lambda \in \lbrack 0,\frac{1}{2})$, $\lambda
\neq \frac{1}{s}$. Then $S$ has a unique fixed point $b\in E$ and $\rho
(b,b)=0$.
\end{theorem}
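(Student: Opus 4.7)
The proof plan follows the same structural template as Theorem \ref{A}, but Kannan's condition forces a different bookkeeping. I would begin by constructing the Picard iteration $u_{n+1}=Su_{n}$ from an arbitrary starting point $u_{0}\in E$. Applying the Kannan condition (\ref{4}) to the pair $(u_{n-1},u_{n})$ yields
\[
\rho(u_{n},u_{n+1})\leq \lambda\bigl[\rho(u_{n-1},u_{n})+\rho(u_{n},u_{n+1})\bigr],
\]
so $\rho(u_{n},u_{n+1})\leq k\,\rho(u_{n-1},u_{n})$ with $k=\lambda/(1-\lambda)\in[0,1)$, since $\lambda<1/2$. Iterating gives $\rho(u_{n},u_{n+1})\leq k^{n}\rho(u_{0},u_{1})\to 0$, and property (2) of partial $b_{v}(s)$ metrics then forces $\rho(u_{n},u_{n})\to 0$ as well.

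Next I would establish that $\{u_{n}\}$ is Cauchy. Unlike the Banach setting, this requires no ball-invariance trick: applying (\ref{4}) directly to $(u_{n-1},u_{m-1})$ gives
\[
\rho(u_{n},u_{m})=\rho(Su_{n-1},Su_{m-1})\leq \lambda\bigl[\rho(u_{n-1},u_{n})+\rho(u_{m-1},u_{m})\bigr]\to 0,
\]
so $\lim_{n,m\to\infty}\rho(u_{n},u_{m})=0$. Completeness then produces $b\in E$ with $\lim\rho(u_{n},b)=\lim\rho(u_{n},u_{m})=\rho(b,b)=0$.

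The main obstacle is to verify that $b$ is a fixed point, and this is exactly where the partial $b_{v}(s)$ coefficient $s$ interacts with the Kannan exponent $\lambda$. Applying axiom (4) of Definition \ref{Def} with intermediate points $z_{i}=u_{n+i}$, and then bounding the last term via $\rho(u_{n+v},Sb)=\rho(Su_{n+v-1},Sb)\leq\lambda[\rho(u_{n+v-1},u_{n+v})+\rho(b,Sb)]$, one arrives at an inequality of the form
\[
(1-s\lambda)\,\rho(b,Sb)\leq s\bigl[\text{terms that vanish as }n\to\infty\bigr].
\]
The hypothesis $\lambda\neq 1/s$ is precisely what keeps the coefficient $1-s\lambda$ nonzero so that one can divide through and conclude $\rho(b,Sb)=0$, i.e., $Sb=b$. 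Handling this coefficient cleanly, and choosing the $b_{v}(s)$ chain so that the Kannan substitution lands only on the last link, is the delicate point of the argument.

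Finally, for uniqueness and the self-distance claim, I would first take $u=w=a$ in (\ref{4}) for any fixed point $a$ to get $\rho(a,a)\leq 2\lambda\,\rho(a,a)$; since $2\lambda<1$, this forces $\rho(a,a)=0$, which establishes the asserted $\rho(b,b)=0$. Then if $a$ and $b$ are two fixed points, (\ref{4}) gives $\rho(a,b)=\rho(Sa,Sb)\leq\lambda[\rho(a,a)+\rho(b,b)]=0$, so $\rho(a,b)=0$ and axiom (1) of Definition \ref{Def} yields $a=b$.
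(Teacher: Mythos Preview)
Your proposal follows the paper's own proof essentially step by step: the same iteration bound $\rho(u_{n},u_{n+1})\leq\bigl(\lambda/(1-\lambda)\bigr)^{n}\rho(u_{0},u_{1})$, the same direct Kannan estimate $\rho(u_{n},u_{m})\leq\lambda[\rho(u_{n-1},u_{n})+\rho(u_{m-1},u_{m})]$ for the Cauchy property, the same $b_{v}(s)$ chain yielding $(1-s\lambda)\rho(b,Sb)\leq s\cdot(\text{vanishing terms})$, and the same two-step uniqueness via $\rho(a,a)=0$ followed by $\rho(a,b)=0$. The only cosmetic addition is your explicit observation that $\rho(u_{n},u_{n})\to 0$, which the paper neither records nor needs.
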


\begin{proof}
$.$First we show the existence of fixed points of $S$. Let define a sequence 
$\left\{ u_{n}\right\} $ by $u_{n}=S^{n}u_{0}$  for all $n\in 
%TCIMACRO{\U{2115} }%
%BeginExpansion
\mathbb{N}
%EndExpansion
$ and an arbitrary point $u_{0}\in E$ and $\sigma _{n}=\rho (u_{n},u_{n+1})$%
. If \ $\sigma _{n}=0$, \ then for at least one $n,$ $u_{n}$ is a fixed
point of $S$. So, let assume that $\sigma _{n}>0$ for all $n\geq 0$. Since $S
$ is a Kannan mapping, it follows from (\ref{4}) that%
\begin{eqnarray*}
\sigma _{n} &=&\rho (u_{n},u_{n+1})=\rho (Su_{n-1},Su_{n}) \\
&\leq &\lambda \left[ \rho (u_{n-1},Su_{n-1})+\rho (u_{n},Su_{n})\right]  \\
&=&\lambda \left[ \rho (u_{n-1},u_{n})+\rho (u_{n},u_{n+1})\right]  \\
&=&\lambda \left[ \sigma _{n-1}+\sigma _{n}\right] \text{.}
\end{eqnarray*}%
Therefore, we get $\sigma _{n}\leq \frac{\lambda }{1-\lambda }\sigma _{n-1}$%
. On repeating this process we obtain%
\begin{equation*}
\sigma _{n}\leq \left( \frac{\lambda }{1-\lambda }\right) ^{n}\sigma _{0}%
\text{.}
\end{equation*}%
From hypothesis, since $\lambda \in \lbrack 0,\frac{1}{2})$, we have%
\begin{equation}
\lim_{n\rightarrow \infty }\sigma _{n}=\lim_{n\rightarrow \infty }\rho
(u_{n},u_{n+1})=0.  \label{i}
\end{equation}
So, for every $\varepsilon >0$, there exists a natural number $n_{0}$ such
that $\sigma _{n}<\varepsilon /2$ and $\sigma _{m}<\varepsilon /2$ for all $%
n,m\geq n_{0}$. From (\ref{(i)}), we have%
\begin{eqnarray*}
\rho (u_{n},u_{m}) &=&\rho (Su_{n-1},Su_{m-1}) \\
&\leq &\lambda \left[ \rho (u_{n-1},Su_{n-1})+\rho (u_{m-1},Su_{m-1})\right] 
\\
&=&\lambda \left[ \rho (u_{n-1},u_{n})+\rho (u_{m-1},u_{m})\right]  \\
&=&\lambda \left[ \sigma _{n-1}+\sigma _{m-1}\right]  \\
&<&\frac{\varepsilon }{2}+\frac{\varepsilon }{2}=\varepsilon 
\end{eqnarray*}%
for $n,m>n_{0}$. Hence,\ $\left\{ u_{n}\right\} $ is Cauchy sequence in $E$
and $\lim_{n,m\rightarrow \infty }\rho (u_{n},u_{m})=0$. It follows from the
completeness of $E$ that there exists $b\in E$ such that%
\begin{equation*}
\lim_{n\rightarrow \infty }\rho (u_{n},b)=\lim_{n,m\rightarrow \infty }\rho
(u_{n},u_{m})=\rho (b,b)=0\text{.}
\end{equation*}%
Now,we show that $b$ is a fixed point of $S$. From definition of Kannan
mappings and partial $b_{v}\left( s\right) $ metric space, we have%
\begin{eqnarray*}
\rho (b,Sb) &\leq &s\left[ \rho (b,u_{n+1})+\rho (u_{n+1},u_{n+2})+\ldots
+\rho (u_{n+v-1},u_{n+v})+\right.  \\
&&\left. \rho (u_{n+v},Sb)\right] -\sum\limits_{i=1}^{v}\rho
(u_{n+i},u_{n+i}) \\
&\leq &s\left[ \rho (b,u_{n+1})+\rho (u_{n+1},u_{n+2})+\ldots +\rho
(u_{n+v-1},u_{n+v})+\rho (u_{n+v},Sb)\right]  \\
&\leq &s\left[ \rho (b,u_{n+1})+\rho (u_{n+1},u_{n+2})+\ldots +\rho
(u_{n+v-1},u_{n+v})+\rho (Su_{n+v-1},Sb)\right]  \\
&\leq &s\left[ \rho (b,u_{n+1})+\rho (u_{n+1},u_{n+2})+\ldots +\rho
(u_{n+v-1},u_{n+v})+\right.  \\
&&\left. \lambda \left\{ \rho (u_{n+v-1},Su_{n+v-1})+\rho (b,Sb)\right\} 
\right] .
\end{eqnarray*}%
So, it follows from the last inequality that%
\begin{eqnarray*}
\rho (b,Sb) &\leq &\frac{s}{(1-s\lambda )}\left[ \rho (b,u_{n+1})+\rho
(u_{n+1},u_{n+2})\right.  \\
&&\left. +...+\rho (u_{n+v-1},u_{n+v})+\lambda \rho (u_{n+v-1},Su_{n+v-1})
\right] \text{.}
\end{eqnarray*}%
Since $\lambda \neq \frac{1}{s}$ and $\left\{ u_{n}\right\} $ is a Cauchy
and convergent sequence, we have $\rho (b,Sb)=0$, so $Sb=b$. It means that $b
$ is a fixed point of $S$.

Now we show the uniqueness of fixed point. But first, we need to show that
if $b\in E$ is a fixed point of $S$, then $\rho (b,b)=0$. Let assume to the
contrary that $\rho (b,b)>0$. Then, from (\ref{4}) we have%
\begin{equation*}
\rho (b,b)=\rho (Sb,Sb)\leq \lambda \left[ \rho (b,Sb)+\rho (b,Sb)\right]
=2\lambda \rho (b,b)<\rho (b,b)\text{,}
\end{equation*}%
which is a contradiction. So, assumption is wrong, namely, $\rho (b,b)=0$.
Now, we can show that $S$ has a unique fixed point. Suppose $a,b\in E$ be
two distinct fixed points of $S$. Then we have $\rho (b,b)=\rho (a,a)=0$,
and it follows from (\ref{4}) that%
\begin{eqnarray*}
\rho (b,a) &=&\rho (Sb,Sa)\leq \lambda \left[ \rho (b,Sb)+\rho (a,Sa)\right] 
\\
&=&\lambda \left[ \rho (b,b)+\rho (a,a)\right] =0
\end{eqnarray*}%
Therefore, we have $\rho (b,a)=0$ and so $b=a$. Thus $S$ has a unique fixed
point. This completes the proof.
\end{proof}

\begin{theorem}
\label{C}Let $(E,\rho )$ be a complete partial $b_{v}\left( s\right) $
metric space and $S:E\rightarrow E$ a mapping satisfying:%
\begin{equation}
\rho (Su,Sw)\leq \lambda \max \left\{ \rho (u,w),\rho (u,Su),\rho
(w,Sw)\right\}   \label{6}
\end{equation}
for all $u,w\in E$ and $\lambda \in \left[ 0,\frac{1}{s}\right) $. Then, $S$
has a unique fixed point $b\in E$ and $\rho (b,b)=0$.
\end{theorem}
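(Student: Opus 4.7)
The plan is to follow closely the scheme used for Theorems \ref{A} and \ref{B}, adapting each step to the Reich-type condition (\ref{6}). Fix $u_0 \in E$, set $u_n = S^n u_0$ and $\sigma_n = \rho(u_n, u_{n+1})$; if $\sigma_n = 0$ for some $n$ then $u_n$ is already a fixed point, so I may assume $\sigma_n > 0$ for all $n$. Applying (\ref{6}) with $u = u_{n-1}$, $w = u_n$ gives $\sigma_n \le \lambda \max\{\sigma_{n-1}, \sigma_n\}$; since $\lambda < 1/s \le 1$, the maximum must be $\sigma_{n-1}$, whence $\sigma_n \le \lambda \sigma_{n-1}$ and iteration yields $\sigma_n \le \lambda^n \sigma_0 \to 0$.

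Next I would show that $\{u_n\}$ is Cauchy. The direct pairwise estimate that worked for Theorem \ref{B} does not close up here, because $\rho(Su_{n-1}, Su_{m-1})$ is bounded by a maximum involving $\rho(u_{n-1}, u_{m-1})$, which we do not control a priori. Instead I would mimic the ball construction of Theorem \ref{A}: given $\varepsilon \in (0, 1)$, choose $l$ large enough that $\sigma_l, \sigma_{l+1}, \ldots, \sigma_{l+v-1}$ lie below explicit thresholds (depending on $s$, $v$, $\lambda$), and form $B_\rho[u_l, \varepsilon/2] := \{w \in E : \rho(u_l, w) \le \varepsilon/2 + \rho(u_l, u_l)\}$. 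The main technical obstacle, and the heart of the argument, is verifying $S(B_\rho[u_l, \varepsilon/2]) \subseteq B_\rho[u_l, \varepsilon/2]$: for $z$ in the ball, apply the $(v+1)$-point polygonal inequality to $\rho(Sz, u_l)$ along the intermediate points $u_{l+1}, \ldots, u_{l+v}$, bound $\rho(Sz, Su_l)$ by (\ref{6}), and control the troublesome summand $\rho(z, Sz)$ via a second polygonal inequality combined with (\ref{6}) applied to $\rho(u_{l+v-1}, Sz) = \rho(Su_{l+v-2}, Sz)$. The arithmetic parallels the computation in Theorem \ref{A} and relies crucially on $s\lambda < 1$. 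Induction then gives $u_n \in B_\rho[u_l, \varepsilon/2]$ for every $n \ge l$, and since $\rho(u_l, u_l) \le \sigma_{l-1} \to 0$ one concludes $\rho(u_n, u_m) < \varepsilon$ for $n, m$ sufficiently large, establishing the Cauchy property.

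Completeness then supplies $b \in E$ with $\lim_n \rho(u_n, b) = \lim_{n,m} \rho(u_n, u_m) = \rho(b, b) = 0$. To show $Sb = b$, expand $\rho(b, Sb)$ by the polygonal inequality along $u_{n+1}, \ldots, u_{n+v}$ and apply (\ref{6}) on the last summand $\rho(u_{n+v}, Sb) = \rho(Su_{n+v-1}, Sb)$; bounding the max by the sum and collecting the coefficient of $\rho(b, Sb)$ yields $(1 - s\lambda)\rho(b, Sb) \le s[\rho(b, u_{n+1}) + \sigma_{n+1} + \cdots + \sigma_{n+v-1} + \lambda \rho(u_{n+v-1}, b) + \lambda \sigma_{n+v-1}]$, whose right side tends to $0$ as $n \to \infty$; since $1 - s\lambda > 0$ we conclude $\rho(b, Sb) = 0$. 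The remaining claims are routine: if $Sa = a$ and $Sb = b$ then (\ref{6}) gives $\rho(a, b) \le \lambda \max\{\rho(a, b), \rho(a, a), \rho(b, b)\}$, and condition (2) of Definition \ref{Def} forces $\rho(a, a), \rho(b, b) \le \rho(a, b)$, so $\rho(a, b) \le \lambda \rho(a, b)$ and $a = b$; similarly (\ref{6}) at $u = w = b$ gives $\rho(b, b) \le \lambda \rho(b, b)$, whence $\rho(b, b) = 0$.
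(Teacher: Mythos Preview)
Your arguments for $\sigma_n\le\lambda^n\sigma_0$, for $Sb=b$, and for uniqueness match the paper's proof essentially verbatim. The difficulty is in your Cauchy step: the ball construction as you describe it does not close up under the hypothesis $\lambda\in[0,1/s)$ alone.

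In Theorem~\ref{A} the ball trick succeeds because the map acting on the ball is $G=S^{n_0}$, whose constant $\lambda^{n_0}$ is chosen \emph{arbitrarily small}; this is precisely what forces the final coefficient in front of $\varepsilon/2$ below~$1$. For the map $S$ with condition~(\ref{6}) you have no such freedom: $\lambda$ may be as close to $1/s$ as you like. Following your outline, the first polygonal inequality gives
\[
\rho(Sz,u_l)\le s\lambda\max\{\rho(z,u_l),\rho(z,Sz),\sigma_l\}+s(\text{small})+s\,\rho(u_{l+v},u_l),
\]
and when the maximum is $\rho(z,Sz)$ your second polygonal inequality (ending in $\rho(u_{l+v-1},Sz)=\rho(Su_{l+v-2},Sz)$) at best yields $(1-s\lambda)\rho(z,Sz)\le s[\rho(z,u_l)+\text{small}]$. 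Substituting back produces a coefficient $\tfrac{s^2\lambda}{1-s\lambda}$ in front of $\varepsilon/2$, and this exceeds~$1$ whenever $\lambda>\tfrac{1}{s(s+1)}$ (e.g.\ $s=2$, $\lambda=0.4$). Moreover, the middle term of the second maximum, $\rho(u_{l+v-2},z)$, is not controlled by $z\in B_\rho[u_l,\varepsilon/2]$ without yet another polygonal estimate that again inflates the constants. So ``the arithmetic parallels Theorem~\ref{A} and relies on $s\lambda<1$'' is not enough; Theorem~\ref{A} really uses that the constant is \emph{small}, not merely below $1/s$.

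The paper avoids the ball entirely. The key observation is that iterating (\ref{6}) along the orbit gives
\[
\rho(u_{n+k},u_{m+k})\le\lambda^{k}\max\{\rho(u_n,u_m),\sigma_n,\sigma_m\}\qquad(k\ge 1).
\]
One then fixes $n_0$ with $s\lambda^{n_0}<1$ and applies the polygonal inequality to $\rho(u_n,u_m)$ using the $v$ intermediate points $u_{n+1},\ldots,u_{n+v-2},u_{n+n_0},u_{m+n_0}$. The term $\rho(u_{n+n_0},u_{m+n_0})$ contributes at most $s\lambda^{n_0}\rho(u_n,u_m)$ (up to $\sigma$-terms, which vanish), and moving it to the left yields
\[
(1-s\lambda^{n_0})\rho(u_n,u_m)\le s\bigl[(\lambda^n+\cdots+\lambda^{n+v-3})\sigma_0+\lambda^n\rho(u_{v-2},u_{n_0})+\lambda^m\rho(u_{n_0},u_0)\bigr]\to 0.
\]
This self-referential estimate is the replacement you need for your Cauchy step; the rest of your proposal is correct.
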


\begin{proof}
We begin with the existence of fixed points of $S$. Let $u_{0}\in E$ be an
arbitrary initial point and let $\left\{ u_{n}\right\} $ be a sequence
defined by $u_{n+1}=Su_{n}$ for all $n$.{\LARGE \ }If $u_{n}=u_{n+1}$ for at
least one natural number $n$, then it is clear that this point is a fixed
point of $S$. So, let assume that $u_{n+1}\neq u_{n}$ for all $n$. Now, it
follows from (\ref{6}) that%
\begin{eqnarray*}
\rho (u_{n+1},u_{n}) &=&\rho (Su_{n},Su_{n-1}) \\
&\leq &\lambda \max \left\{ \rho (u_{n},u_{n-1}),\rho (u_{n},Su_{n}),\rho
(u_{n-1},Su_{n-1})\right\}  \\
&=&\lambda \max \left\{ \rho (u_{n},u_{n-1}),\rho (u_{n},u_{n+1}),\rho
(u_{n-1},u_{n})\right\}  \\
&=&\lambda \max \left\{ \rho (u_{n},u_{n-1}),\rho (u_{n},u_{n+1})\right\} 
\text{.}
\end{eqnarray*}%
Set $L=\max \left\{ \rho (u_{n},u_{n-1}),\rho (u_{n},u_{n+1})\right\} $.
There exist two cases. If $L=\rho (u_{n},u_{n+1})$, then we get $\rho
(u_{n+1},u_{n})\leq \lambda \rho (u_{n+1},u_{n})<\rho (u_{n+1},u_{n})$ which
is a contradiction. So, we must have $L=\rho (u_{n},u_{n-1})$ and then we
have%
\begin{equation*}
\rho (u_{n+1},u_{n})\leq \lambda \rho (u_{n},u_{n-1})\text{.}
\end{equation*}%
On repeating this process, we obtain%
\begin{equation}
\rho (u_{n+1},u_{n})\leq \lambda ^{n}\rho (u_{1},u_{0})  \label{7}
\end{equation}
for all $n$. On the other hand, since $\lambda ^{n}\rightarrow 0$ for $%
n\rightarrow \infty $, there exists a natural number $n_{0}$ such that $%
0<\lambda ^{n_{0}}s<1$. For $m,n\in 
%TCIMACRO{\U{2115} }%
%BeginExpansion
\mathbb{N}
%EndExpansion
$ with $m>n$, by using inequality (\ref{7}), we obtain%
\begin{eqnarray*}
\rho (u_{n},u_{m}) &\leq &s\left[ \rho (u_{n},u_{n+1})+\rho
(u_{n+1},u_{n+2})+\ldots +\rho (u_{n+v-3},u_{n+v-2})\right.  \\
&&\left. +\rho (u_{n+v-2},u_{n+n_{0}})+\rho (u_{n+n_{0}},u_{m+n_{0}})+\rho
(u_{m+n_{0}},u_{m})\right]  \\
&&-\sum\limits_{i=1}^{v-2}\rho (u_{n+i},u_{n+i})-\rho
(u_{n+n_{0}},u_{n+n_{0}})-\rho (u_{m+n_{0}},u_{m+n_{0}}) \\
&\leq &s\left[ \rho (u_{n},u_{n+1})+\rho (u_{n+1},u_{n+2})+\ldots +\rho
(u_{n+v-3},u_{n+v-2})\right.  \\
&&\left. +\rho (u_{n+v-2},u_{n+n_{0}})+\rho (u_{n+n_{0}},u_{m+n_{0}})+\rho
(u_{m+n_{0}},u_{m})\right]  \\
&\leq &s\left( \lambda ^{n}+\lambda ^{n+1}+\cdots +\lambda ^{n+v-3}\right)
\rho (u_{0},u_{1}) \\
&&+s\lambda ^{n}\rho (u_{v-2},u_{n_{0}})+s\lambda ^{n_{0}}\rho
(u_{n},u_{m})+s\lambda ^{m}\rho (u_{n_{0}},u_{0}).
\end{eqnarray*}%
So, we get%
\begin{eqnarray*}
\left( 1-s\lambda ^{n_{0}}\right) \rho (u_{n},u_{m}) &\leq &s\left( \lambda
^{n}+\lambda ^{n+1}+\cdots +\lambda ^{n+v-3}\right) \rho (u_{0},u_{1}) \\
&&+s\lambda ^{n}\rho (u_{v-2},u_{n_{0}})+s\lambda ^{m}\rho (u_{n_{0}},u_{0}).
\end{eqnarray*}%
By taking limit from both side, we have%
\begin{equation*}
\lim_{n,m\rightarrow \infty }\rho (u_{n},u_{m})=0
\end{equation*}

Therefore, $\left\{ u_{n}\right\} $ is a Cauchy sequence in $E$. By
completeness of $E,$ there exists $b\in E$ such that%
\begin{equation}
\lim_{n\rightarrow \infty }\rho (u_{n},b)=\lim_{n,m\rightarrow \infty }\rho
(u_{n},u_{m})=\rho (b,b)=0\text{.}  \label{8}
\end{equation}%
Now, we show that $b$ is a fixed point of $S$. From definition of partial $%
b_{v}\left( s\right) $ metric space and inequality (\ref{6}), we have%
\begin{eqnarray*}
\rho (b,Sb) &\leq &s\left[ \rho (b,u_{n+1})+\rho (u_{n+1},u_{n+2})+\ldots
+\rho (u_{n+v-1},u_{n+v})+\right.  \\
&&\left. \rho (u_{n+v},Sb)\right] -\sum\limits_{i=1}^{v}\rho
(u_{n+i},u_{n+i}) \\
&\leq &s\left[ \rho (b,u_{n+1})+\rho (u_{n+1},u_{n+2})+\ldots +\rho
(u_{n+v-1},u_{n+v})+\rho (u_{n+v},Sb)\right]  \\
&\leq &s\left[ \rho (b,u_{n+1})+\rho (u_{n+1},u_{n+2})+\ldots +\rho
(u_{n+v-1},u_{n+v})+\rho (Su_{n+v-1},Sb)\right]  \\
&\leq &s\left[ \rho (b,u_{n+1})+\rho (u_{n+1},u_{n+2})+\ldots +\rho
(u_{n+v-1},u_{n+v})+\right.  \\
&&\left. \lambda \max \left\{ \rho (u_{n+v-1},b),\rho
(u_{n+v-1},u_{n+v}),\rho (b,Sb)\right\} \right] .
\end{eqnarray*}%
Set $F=\max \left\{ \rho (u_{n+v-1},b),\rho (u_{n+v-1},u_{n+v}),\rho
(b,Sb)\right\} $. There exists three cases:

1. If $F=\rho (u_{n+v-1},b)$, then we get%
\begin{equation*}
\rho (b,Sb)\leq s\left[ \rho (b,u_{n+1})+\rho (u_{n+1},u_{n+2})+\ldots +\rho
(u_{n+v-1},u_{n+v})+\lambda \rho (u_{n+v-1},b)\right] .
\end{equation*}%
So, it follows from (\ref{8}) that $\rho (b,Sb)=0$.

2. If $F=\rho (u_{n+v-1},u_{n+v})$, then we get%
\begin{equation*}
\rho (b,Sb)\leq s\left[ \rho (b,u_{n+1})+\rho (u_{n+1},u_{n+2})+\ldots
+\left( 1+\lambda \right) \rho (u_{n+v-1},u_{n+v})\right] .
\end{equation*}%
Again by using (\ref{8}), we obtain that $\rho (b,Sb)=0$.

3. If $F=\rho (b,Sb)$ then we get%
\begin{equation*}
\left( 1-s\lambda \right) \rho (b,Sb)\leq s\left[ \rho (b,u_{n+1})+\rho
(u_{n+1},u_{n+2})+\ldots +\rho (u_{n+v-1},u_{n+v})\right] .
\end{equation*}%
Since $\lambda \in \left[ 0,\frac{1}{s}\right) $, we obtain that $\rho
(b,Sb)=0$, that is, $Sb=b$. Thus, $b$ is a fixed poit of $S$. 

Now we show the uniqueness of fixed point of $S$. Suppose on the contrary
that $a$ and $b$ are two distinct fixed points of $S$ and $\rho (a,b)>0$. It
follows from (\ref{6}) that%
\begin{eqnarray*}
\rho (a,b) &=&\rho (Sa,Sb)\leq \lambda \max \left\{ \rho (a,b),\rho
(a,Sa),\rho (b,Sb)\right\}  \\
&=&\lambda \max \left\{ \rho (a,b),\rho (a,a),\rho (b,b)\right\}  \\
&=&\lambda \rho (a,b)<\rho (a,b)\text{,}
\end{eqnarray*}%
which is a cotradiction. Therefore, we must have $\rho (a,b)=0$ and so $a=b$%
. Hence, $S$ has a unique fixed point.
\end{proof}

In definition \ref{Def}, if we take $s=1$, then we derive following
definition of partial $v$-generalized metric space.

\begin{definition}
Let $E$ be a nonempty set and $\rho :$ $E\times E\rightarrow \lbrack
0,\infty )$ be a mapping and $v\in 
%TCIMACRO{\U{2115} }%
%BeginExpansion
\mathbb{N}
%EndExpansion
$. Then $(E,\rho )$ is said to be a partial $v$-generalized metric space if
following conditions hold for all $u,w,z_{1},z_{2},\ldots ,z_{v}\in E$:
\end{definition}

\begin{enumerate}
\item $u=w\Leftrightarrow \rho (u,u)=\rho (u,w)=\rho (w,w)$;

\item $\rho (u,u)\leq \rho (u,w)$;

\item $\rho (u,w)=\rho (w,u)$;

\item $\rho (u,w)\leq \rho (u,z_{1})+\rho (z_{1},z_{2})+\ldots +\rho
(z_{v-1},z_{v})+\rho (z_{v},y)-\sum_{i=1}^{v}\rho (z_{i},z_{i})$.
\end{enumerate}

In Theorems \ref{A},\ref{B} and \ref{C}, if take $s=1$, then we derive
following fixed point theorems in partial $v$-generalized metric space.

\begin{corollary}
Let $(E,\rho )$ be a complete partial $v$-generalized metric space and $%
S:E\rightarrow E$ be a contraction mapping, i.e., $S$ satisfies%
\begin{equation*}
\rho (Su,Sw)\leq \lambda \rho (u,w)
\end{equation*}%
for all $u,w\in E,$ where $\lambda \in \lbrack 0,1)$. Then $S$ has a unique
fixed point $b\in S$ and $\rho (b,b)=0$.
\end{corollary}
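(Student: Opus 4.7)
The plan is to recognize this corollary as a direct specialization of Theorem~\ref{A} to the case $s=1$, so that essentially no new work is required. The key observation is that the definition of a partial $v$-generalized metric space, as given immediately before the corollary, is literally the definition of a partial $b_{v}(s)$ metric space in which the scaling constant has been fixed to $s=1$. Since the remark following Definition~\ref{Def} already notes that the partial $b_{v}(s)$ framework degenerates to the partial $v$-generalized framework at $s=1$, every partial $v$-generalized metric space is in particular a partial $b_{v}(s)$ metric space.

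With that in hand, I would first observe that the notions of convergent sequence, Cauchy sequence, and completeness transcribed for partial $b_{v}(s)$ metric spaces remain meaningful verbatim when $s=1$; in particular, the completeness hypothesis on $(E,\rho)$ in the statement of the corollary coincides with completeness in the partial $b_{v}(1)$ sense. Next, I would note that the contraction condition $\rho(Su,Sw)\le \lambda\rho(u,w)$ with $\lambda\in[0,1)$ is identical to hypothesis~(\ref{1}) of Theorem~\ref{A}; in particular the mapping $S$ satisfies (\ref{1}) with the same constant $\lambda$ regardless of whether we view $(E,\rho)$ as a partial $v$-generalized metric space or as a partial $b_{v}(1)$ metric space.

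Having matched all the hypotheses, I would invoke Theorem~\ref{A} applied to $(E,\rho)$ with $s=1$ to conclude that $S$ admits a fixed point $b\in E$, that this fixed point is unique, and that $\rho(b,b)=0$. This directly yields the three conclusions demanded by the corollary.

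The only thing to be careful about is the bookkeeping check that no step in the proof of Theorem~\ref{A} required $s>1$ strictly: an inspection of that proof shows that every inequality involving $s$ (e.g., $\lambda^{n_{0}}<\varepsilon/(4s)$, the various $s(s+1)$ and $s^{2}$ terms, and the final passage to $\rho(b,Sb)=0$ through (\ref{3})) remains valid and in fact simplifies when $s=1$. So the main, and only, conceptual point to confirm is that the argument for Theorem~\ref{A} is uniform in $s\ge 1$, after which the corollary follows by specialization and no separate proof is needed.
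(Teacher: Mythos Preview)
Your proposal is correct and matches the paper's own treatment: the paper states this corollary immediately after noting that setting $s=1$ in Theorems~\ref{A}, \ref{B}, and~\ref{C} yields the corresponding results in partial $v$-generalized metric spaces, and gives no separate proof. Your additional remark that the proof of Theorem~\ref{A} goes through uniformly for $s\ge 1$ is a reasonable sanity check but is not explicitly carried out in the paper either.
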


\begin{corollary}
Let $(E,\rho )$ be a complete partial $v$-generalized metric space and $%
S:E\rightarrow E$ a mapping satisfying the following condition:%
\begin{equation*}
\rho (Su,Sy)\leq \lambda \left[ \rho (u,Su)+\rho (w,Sw)\right] 
\end{equation*}%
for all $u,w\in E$, where $\lambda \in \lbrack 0,\frac{1}{2})$. Then $S$ has
a unique fixed point $b\in E$ and $\rho (b,b)=0$.
\end{corollary}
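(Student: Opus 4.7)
The plan is to reduce this corollary directly to Theorem \ref{B}. Setting $s=1$ in Definition \ref{Def} yields exactly the notion of a partial $v$-generalized metric space (this is the sentence immediately preceding the corollary), so any complete partial $v$-generalized metric space is simply a complete partial $b_{v}(s)$ metric space with coefficient $s=1$. Moreover, the hypothesis $\lambda\in[0,1/2)$ forces $\lambda<1=1/s$, so the side condition $\lambda\neq 1/s$ appearing in Theorem \ref{B} is automatically satisfied. With this identification, I would invoke Theorem \ref{B} to produce a unique fixed point $b\in E$ of $S$ with $\rho(b,b)=0$, which is precisely the claim.

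If one prefers a self-contained argument rather than an appeal to Theorem \ref{B}, one can rerun its proof word-for-word with $s=1$. First, the Picard iterates $u_{n}=S^{n}u_{0}$ and the quantities $\sigma_{n}=\rho(u_{n},u_{n+1})$ still satisfy $\sigma_{n}\leq \bigl(\lambda/(1-\lambda)\bigr)^{n}\sigma_{0}\to 0$; this step never uses $s$. Second, the Cauchy estimate $\rho(u_{n},u_{m})\leq \lambda(\sigma_{n-1}+\sigma_{m-1})$ is a pure Kannan computation and does not invoke the relaxed triangle inequality, hence is unchanged. Completeness then produces a candidate limit $b$ with $\rho(u_{n},b)\to 0$ and $\rho(b,b)=0$.

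Third, identifying $b$ as a fixed point uses the $v$-step quadrilateral-type inequality in the axiom $(4)$ of the partial $v$-generalized metric space. Specializing the corresponding estimate from Theorem \ref{B} to $s=1$, one obtains
\[
\rho(b,Sb)\leq \frac{1}{1-\lambda}\Bigl[\rho(b,u_{n+1})+\rho(u_{n+1},u_{n+2})+\cdots+\rho(u_{n+v-1},u_{n+v})+\lambda\rho(u_{n+v-1},Su_{n+v-1})\Bigr],
\]
where the factor $s/(1-s\lambda)$ collapses to $1/(1-\lambda)$ and is finite since $\lambda<1/2<1$. Letting $n\to\infty$ forces $\rho(b,Sb)=0$, so $Sb=b$. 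Finally, uniqueness and $\rho(b,b)=0$ follow from exactly the same two short contradiction arguments as in Theorem \ref{B}: if $\rho(b,b)>0$ then $\rho(b,b)=\rho(Sb,Sb)\leq 2\lambda\rho(b,b)<\rho(b,b)$, and if $a\neq b$ are both fixed, then $\rho(a,b)=\rho(Sa,Sb)\leq \lambda[\rho(a,a)+\rho(b,b)]=0$.

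I do not expect any real obstacle: all of the content is already present in Theorem \ref{B}, and the hypothesis $\lambda\in[0,1/2)$ is strictly stronger than what is required to make the $s=1$ specialization go through. The only point that deserves a sentence of care is verifying that the excluded value $\lambda=1/s$ is not hit; this is immediate since $1/s=1$ and $\lambda<1/2$.
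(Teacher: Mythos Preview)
Your proposal is correct and matches the paper's approach exactly: the paper states this corollary (together with the two neighboring ones) as an immediate specialization of Theorem~\ref{B} to the case $s=1$, with no separate proof. Your observation that $\lambda<1/2<1=1/s$ automatically handles the side condition $\lambda\neq 1/s$ is precisely the one check needed to make the reduction go through.
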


\begin{corollary}
Let $(E,\rho )$ be a complete partial $v$-generalized metric space and $%
S:E\rightarrow E$ a mapping satisfying:%
\begin{equation*}
\rho (Su,Sw)\leq \lambda \max \left\{ \rho (u,w),\rho (u,Su),\rho
(w,Sw)\right\} 
\end{equation*}%
for all $u,w\in E$ and $\lambda \in \left[ 0,1\right) $. Then, $S$ has a
unique fixed point $b\in E$ and $\rho (b,b)=0$.
\end{corollary}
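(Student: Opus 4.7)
The plan is to derive this corollary directly from Theorem~\ref{C} by specializing the parameter $s = 1$. First I would observe that the definition of a partial $v$-generalized metric space given just above is exactly the $s = 1$ instance of Definition~\ref{Def}: the final inequality in the partial $v$-generalized setting is obtained from the fourth axiom of a partial $b_{v}(s)$ metric space by substituting $s = 1$, while the other three axioms are identical. Consequently, every partial $v$-generalized metric space $(E,\rho)$ is automatically a partial $b_{v}(1)$ metric space, and the notions of convergent sequence, Cauchy sequence, and completeness transfer without change because their definitions make no reference to $s$.

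Next I would verify that the contractive hypothesis on $S$ has precisely the form required by Theorem~\ref{C}. The assumption
\[
\rho(Su,Sw) \le \lambda \max\{\rho(u,w),\rho(u,Su),\rho(w,Sw)\}
\]
is identical in shape to inequality~(\ref{6}); moreover, with $s = 1$ the admissible range $\lambda \in [0,1/s)$ appearing in Theorem~\ref{C} collapses to $\lambda \in [0,1)$, which is exactly the range assumed in the corollary. Therefore all hypotheses of Theorem~\ref{C} are in force.

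Applying Theorem~\ref{C} then delivers the existence of a unique fixed point $b \in E$ of $S$ together with the self-distance identity $\rho(b,b) = 0$, which is exactly the conclusion sought. I do not anticipate any genuine obstacle, since the corollary is a transparent specialization of the main theorem. The only delicate point worth inspection is the Cauchy estimate used inside the proof of Theorem~\ref{C}, where a factor $1 - s\lambda^{n_{0}}$ and, in case~3 of the fixed-point step, a factor $1 - s\lambda$ appear; under $s = 1$ these reduce to $1 - \lambda^{n_{0}}$ and $1 - \lambda$ respectively, both strictly positive because $\lambda < 1$. Every other step, including the three-case analysis that concludes $Sb = b$ and the uniqueness argument, then goes through verbatim with $s$ replaced by $1$.
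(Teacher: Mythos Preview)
Your proposal is correct and follows exactly the paper's own approach: the paper states explicitly that this corollary is obtained from Theorem~\ref{C} by taking $s=1$, and provides no separate proof. Your additional remarks verifying that the factors $1-s\lambda^{n_{0}}$ and $1-s\lambda$ remain strictly positive under $s=1$ are a welcome sanity check, but nothing beyond the direct specialization is needed.
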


\subsection{$b_{v}\left( \protect\theta \right) $ Metric Spaces}

In 2017, Kamran et al. introduced following generalized metric space which
they call extended $b$-metric space.

\begin{definition}
\cite{tay} Let $E$ be a nonempty set and let $\theta :E\times E\rightarrow
\lbrack 1,\infty )$ be a function. A function $\rho _{\theta }:E\times
E\rightarrow \lbrack 0,\infty )$ is called an extended $b$-metric if for all 
$u,v,w\in E$ it satisfies:

\begin{enumerate}
\item[(1)] $\rho _{\theta }(u,w)=0$ iff $u=w$;

\item[(2)] $\rho _{\theta }(u,w)=\rho (w,u)$;

\item[(3)] $\rho _{\theta }(u,w)\leq \theta \left( u,w\right) [\rho _{\theta
}(u,v)+\rho _{\theta }(v,w)].$

The pair $\left( E,\rho _{\theta }\right) $ is called an extended $b$-metric
space.
\end{enumerate}
\end{definition}

It is clear that if $\theta \left( u,w\right) =s$ for all $u,w\in E,$ then
we obtain $b$-metric space.

From this point of view, we introduce following generalized metric space
called as $b_{v}(\theta )$ (or extended $b_{v}(s)$ ) metric space.

\begin{definition}
Let $E$ be a nonempty set, $\theta :E\times E\rightarrow \lbrack 1,\infty )$
a function and $v\in 
%TCIMACRO{\U{2115} }%
%BeginExpansion
\mathbb{N}
%EndExpansion
$. Then $\rho _{\theta }:E\times E\rightarrow \lbrack 0,\infty )$ is called $%
b_{v}(\theta )$ metric if for all $u,z_{1},z_{2},...,z_{v},w\in E$, each of
them different from each other, it satisfies

\begin{enumerate}
\item[(1)] $\rho _{\theta }(u,w)=0$ iff $u=w$;

\item[(2)] $\rho _{\theta }(u,w)=\rho _{\theta }(w,u)$;

\item[(3)] $\rho _{\theta }(u,w)\leq \theta \left( u,w\right) [\rho _{\theta
}(u,z_{1})+\rho _{\theta }(z_{1},z_{2})+\cdots +\rho _{\theta }\left(
z_{v},w\right) ].$

The pair $\left( E,\rho _{\theta }\right) $ is called $b_{v}(\theta )\ $%
metric space.
\end{enumerate}
\end{definition}

\begin{remark}
It is clear that if for all $u,w\in E$

\begin{enumerate}
\item $\theta \left( u,w\right) =s$, then we obtain $b_{v}(s)$ metric space,

\item $v=1$, then we obtain extended $b$-metric space,

\item $\theta \left( u,w\right) =s$ and $v=1$, then we obtain $b$-metric
space,

\item $\theta \left( u,w\right) =s$ and $v=2$, then we obtain rectangular $b$%
-metric space,

\item $\theta \left( u,w\right) =1$ and $v=2$, then we obtain rectangular
metric space,

\item $\theta \left( u,w\right) =1$, then we obtain $v$-generalized metric
space,

\item $\theta \left( u,w\right) =1$ and $v=1$, then we obtain usual metric
space.
\end{enumerate}
\end{remark}

\begin{example}
Let $E=%
%TCIMACRO{\U{2115} }%
%BeginExpansion
\mathbb{N}
%EndExpansion
$. Define mappings $\theta :%
%TCIMACRO{\U{2115} }%
%BeginExpansion
\mathbb{N}
%EndExpansion
\times 
%TCIMACRO{\U{2115} }%
%BeginExpansion
\mathbb{N}
%EndExpansion
\rightarrow \lbrack 1,\infty )$ and $\rho _{\theta }:%
%TCIMACRO{\U{2115} }%
%BeginExpansion
\mathbb{N}
%EndExpansion
\times 
%TCIMACRO{\U{2115} }%
%BeginExpansion
\mathbb{N}
%EndExpansion
\rightarrow \lbrack 0,\infty )$ by $\theta \left( u,w\right) =3+u+w$ and 
\begin{equation*}
\rho _{\theta }\left( u,w\right) =\left\{ 
\begin{array}{ll}
6, & \text{if }u,w\in \left\{ 1,2\right\} \text{ and }u\neq w \\ 
1, & \text{if }u\text{ or }w\notin \left\{ 1,2\right\} \text{ and }u\neq w
\\ 
0, & \text{if }u=w%
\end{array}%
\right.
\end{equation*}%
for all $u,w\in 
%TCIMACRO{\U{2115} }%
%BeginExpansion
\mathbb{N}
%EndExpansion
$. Then, it is easy to see that $\left( E,\rho _{\theta }\right) $ is a $%
b_{v}\left( \theta \right) $ metric space with $v=5$.
\end{example}

Definitions of Cauchy sequence, convergence and completeness can be easily
extended to the case of $b_{v}\left( \theta \right) $ metric space by the
following way.

\begin{definition}
Let $\left( E,\rho _{\theta }\right) $ be a $b_{v}\left( \theta \right) $
metric space, $\left\{ u_{n}\right\} $ a sequence in $E$ and $u\in E$. Then,

\begin{enumerate}
\item[a)] $\left\{ u_{n}\right\} $ is said to converge to $u$ in $\left(
E,\rho _{\theta }\right) $ if for every $\varepsilon >0$, there exists $%
n_{0}\in 
%TCIMACRO{\U{2115} }%
%BeginExpansion
\mathbb{N}
%EndExpansion
$ such that $\rho _{\theta }\left( u_{n},u\right) <\varepsilon $ for all $%
n\geq n_{0}$ and this convergence is denoted by $u_{n}\rightarrow u$.

\item[b)] $\left\{ u_{n}\right\} $ is said to be Cauchy sequence in $\left(
E,\rho _{\theta }\right) $ if for every $\varepsilon >0$, there exists $%
n_{0}\in 
%TCIMACRO{\U{2115} }%
%BeginExpansion
\mathbb{N}
%EndExpansion
$ such that $\rho _{\theta }\left( u_{n},u_{n+p}\right) <\varepsilon $ for
all $n\geq n_{0}$ and $p>0.$

\item[c)] $\left( E,\rho _{\theta }\right) $ is said to be complete if every
Cauchy sequence in $E$ is convergent in $E$.
\end{enumerate}
\end{definition}

Now, we are in the position to prove fixed point theorems in $b_{v}(\theta )$
metric spaces. But first, we prove following lemmas which we need in the
proof of main theorems.

\begin{lemma}
\label{A1}Let $\left( E,\rho _{\theta }\right) $ be a $b_{v}(\theta )$
metric space, $S:E\rightarrow E$ a mapping and $\left\{ u_{n}\right\} $ a
sequence in $E$ defined by $u_{n+1}=Su_{n}=S^{n}u_{0}$ such that $u_{n}\neq
u_{n+1}$. Suppose that $c\in \lbrack 0,1)$ such that%
\begin{equation*}
\rho _{\theta }\left( u_{n+1},u_{n}\right) \leq c\rho _{\theta }\left(
u_{n},u_{n-1}\right)
\end{equation*}%
for all $n\in 
%TCIMACRO{\U{2115} }%
%BeginExpansion
\mathbb{N}
%EndExpansion
$. Then $u_{n}\neq u_{m}$ for all distinct $n,m\in 
%TCIMACRO{\U{2115} }%
%BeginExpansion
\mathbb{N}
%EndExpansion
$.
\end{lemma}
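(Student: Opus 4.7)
My plan is to argue by contradiction. Suppose there exist distinct indices with $u_n = u_m$; without loss of generality take $n < m$ and set $p = m - n \geq 1$. Since $u_{k+1} = Su_k$ for every $k$, applying $S$ to the equality $u_n = u_m$ repeatedly yields $u_{n+j} = u_{m+j}$ for all $j \geq 0$. In particular the two consecutive distances agree:
\begin{equation*}
\rho_{\theta}(u_{n+1}, u_n) \;=\; \rho_{\theta}(u_{m+1}, u_m).
\end{equation*}

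Next I would iterate the contractive hypothesis $\rho_{\theta}(u_{k+1}, u_k) \leq c\,\rho_{\theta}(u_k, u_{k-1})$ from index $n$ up to index $m$ to obtain
\begin{equation*}
\rho_{\theta}(u_{m+1}, u_m) \;\leq\; c^{\,m-n}\,\rho_{\theta}(u_{n+1}, u_n) \;=\; c^{p}\,\rho_{\theta}(u_{n+1}, u_n).
\end{equation*}
Combining the two displays gives $\rho_{\theta}(u_{n+1}, u_n) \leq c^{p}\,\rho_{\theta}(u_{n+1}, u_n)$. Because $u_n \neq u_{n+1}$ by hypothesis, axiom (1) of the $b_{v}(\theta)$ metric forces $\rho_{\theta}(u_{n+1}, u_n) > 0$, so we may divide through to conclude $1 \leq c^{p}$. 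This contradicts $c \in [0,1)$ together with $p \geq 1$.

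The argument is essentially a standard periodicity-plus-contraction contradiction, so I do not anticipate any serious obstacle. The one subtle point to be careful about is justifying strict positivity of $\rho_{\theta}(u_{n+1}, u_n)$, which is why the hypothesis $u_n \neq u_{n+1}$ (rather than merely non-degeneracy of the orbit) is needed and must be invoked explicitly via axiom (1). Everything else is routine iteration of the two given relations.
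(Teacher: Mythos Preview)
Your argument is correct and is precisely the standard periodicity-versus-contraction contradiction that the paper has in mind; indeed the paper omits the proof entirely, simply pointing to Lemma~1.11 of \cite{mr}, whose argument is essentially the one you have written out. No changes are needed.
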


\begin{proof}
Since the proof is very similar with the proof of Lemma 1.11 of \cite{mr},
we omit it.
\end{proof}

\begin{lemma}
\label{A2}Let $\left( E,\rho _{\theta }\right) $ be a $b_{v}(\theta )$
metric space with a bounded function $\theta $ and $\left\{ u_{n}\right\} $
a sequence in $E$ defined by $u_{n+1}=Su_{n}=S^{n}u_{0}$ such that $%
u_{n}\neq u_{m}$ for all $n,m\in 
%TCIMACRO{\U{2115} }%
%BeginExpansion
\mathbb{N}
%EndExpansion
$. Assume that there exist $c\in \lbrack 0,1)$ and $k_{1},k_{2}\in 
%TCIMACRO{\U{211d} }%
%BeginExpansion
\mathbb{R}
%EndExpansion
^{+}\cup \left\{ 0\right\} $ such that%
\begin{equation}
\rho _{\theta }\left( u_{m},u_{n}\right) \leq c\rho _{\theta }\left(
u_{m-1},u_{n-1}\right) +k_{1}c^{m}+k_{2}c^{m}  \label{2.1}
\end{equation}%
for all $n,m\in 
%TCIMACRO{\U{2115} }%
%BeginExpansion
\mathbb{N}
%EndExpansion
$. Then $\left\{ u_{n}\right\} $ is a Cauchy sequence in $E$.
\end{lemma}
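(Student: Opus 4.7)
The plan is to convert the recursive bound (\ref{2.1}) into a closed-form estimate and then show that $\rho_{\theta}(u_n,u_m)\to 0$ as $n,m\to\infty$. First I would iterate (\ref{2.1}): without loss of generality assume $m\le n$, and apply the inequality to the pair $(m,n)$, then to $(m-1,n-1)$, and so on down to $(0,n-m)$. At step $i$ the recursion contributes an additive term $c^{i-1}\cdot(k_{1}+k_{2})c^{m-i+1}=(k_{1}+k_{2})c^{m}$, so after $m$ iterations I obtain
\begin{equation*}
\rho_{\theta}(u_{m},u_{n})\leq c^{m}\rho_{\theta}(u_{0},u_{n-m})+m(k_{1}+k_{2})c^{m}.
\end{equation*}
As $c\in[0,1)$, both $c^{m}$ and $mc^{m}$ tend to $0$. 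In particular, specializing to $n=m+1$ gives $\rho_{\theta}(u_{m},u_{m+1})\to 0$ and even $\sum_{m}\rho_{\theta}(u_{m},u_{m+1})<\infty$, so the consecutive-distance sequence $\Delta_{m}:=\rho_{\theta}(u_{m},u_{m+1})$ is summable.

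Next, to pass from a pointwise to a uniform (in $n-m$) estimate, I need $\rho_{\theta}(u_{0},u_{j})$ to remain bounded as $j$ varies. Here I would invoke Lemma~\ref{A1}, which guarantees that the terms $u_{0},u_{1},\ldots,u_{v},u_{j}$ are pairwise distinct for $j>v$, so that the $b_{v}(\theta)$-inequality applies with intermediates $z_{i}=u_{i}$. Writing $M$ for an upper bound of $\theta$, this gives
\begin{equation*}
\rho_{\theta}(u_{0},u_{j})\leq M\bigl[\Delta_{0}+\Delta_{1}+\cdots+\Delta_{v-1}+\rho_{\theta}(u_{v},u_{j})\bigr],
\end{equation*}
and I would iterate (\ref{2.1}) $v$ times on $\rho_{\theta}(u_{v},u_{j})$, combined with the summability of $\{\Delta_{i}\}$, to produce a bound independent of $j$.

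Finally, substituting the uniform bound $\rho_{\theta}(u_{0},u_{n-m})\le C$ back into the iterated estimate yields
\begin{equation*}
\rho_{\theta}(u_{m},u_{n})\leq c^{m}C+m(k_{1}+k_{2})c^{m}\longrightarrow 0
\end{equation*}
as $m\to\infty$, uniformly in $n>m$. This is exactly the Cauchy criterion in $(E,\rho_{\theta})$.

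The step I expect to be most delicate is the uniform boundedness of $\rho_{\theta}(u_{0},u_{j})$: the naive recursion coming from the $b_{v}(\theta)$-inequality produces a factor $M$ on the tail term $\rho_{\theta}(u_{0},u_{j-v})$, which would blow up unless one can absorb it using $c$-decay (something like $Mc^{v}<1$) or reorganize the chain so that the $M$-factor only multiplies the summable tail $\sum\Delta_{i}$. Getting this balance to work without imposing an extra hypothesis on $\theta$ is where the real care is needed, and I would expect to lean heavily on the boundedness of $\theta$ together with the geometric decay of $\Delta_{m}\le c^{m}[\Delta_{0}+m(k_{1}+k_{2})]$ established in the first step.
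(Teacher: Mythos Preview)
Your iteration of (\ref{2.1}) down to $\rho_{\theta}(u_{0},u_{n-m})$ is fine, and you correctly locate the obstacle: establishing a uniform bound on $\rho_{\theta}(u_{0},u_{j})$. But the argument you sketch for that step does not close. With your chain $u_{0},u_{1},\ldots,u_{v},u_{j}$ the tail term is $\rho_{\theta}(u_{v},u_{j})$, and $v$ iterations of (\ref{2.1}) turn it into $c^{v}\rho_{\theta}(u_{0},u_{j-v})$ plus lower order. After multiplying by $M=\sup\theta$ you get a recursion $A_{j}\le\text{const}+Mc^{v}A_{j-v}$, which is only contractive when $Mc^{v}<1$; nothing in the hypotheses forces this. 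Your alternative suggestion, ``reorganize so that $M$ only multiplies the summable tail'', cannot work as stated either, because in the $b_{v}(\theta)$ inequality the factor $\theta(u,w)$ multiplies \emph{every} term on the right, including whichever one you designate as the tail.

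The missing idea---and this is exactly what the paper does---is that the $v$ intermediate points in the $b_{v}(\theta)$ inequality need not be consecutive iterates. Since $\theta$ is bounded and $c<1$, pick $n_{0}$ with $c^{n_{0}}M<1$. The paper then applies the $b_{v}(\theta)$ inequality to $\rho_{\theta}(u_{n},u_{m})$ directly, with the chain $u_{n+1},\ldots,u_{n+v-2},u_{n+n_{0}},u_{m+n_{0}}$. The term $\rho_{\theta}(u_{n+n_{0}},u_{m+n_{0}})$ appears on the right, and (\ref{2.1}) iterated $n_{0}$ times bounds it by $c^{n_{0}}\rho_{\theta}(u_{n},u_{m})$ plus terms of order $n_{0}c^{n}$. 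Because $Mc^{n_{0}}<1$, this can be absorbed into the left-hand side, yielding $(1-Mc^{n_{0}})\rho_{\theta}(u_{n},u_{m})\le M\cdot[\text{sums of }c^{n},nc^{n}\text{-type terms}]\to 0$. The same trick fixes your route: in your chain for $\rho_{\theta}(u_{0},u_{j})$, replace the last intermediate $u_{v}$ by $u_{n_{0}}$, so the tail becomes $\rho_{\theta}(u_{n_{0}},u_{j})\le c^{n_{0}}\rho_{\theta}(u_{0},u_{j-n_{0}})+\cdots$, and now the recursion for $A_{j}$ has ratio $Mc^{n_{0}}<1$.
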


\begin{proof}
It is easy to see that $\left\{ u_{n}\right\} $ is Cauchy if $c=0$. So, we
should assume that $c\neq 0$. Since function $\theta \left( u,w\right) $ is
bounded, there exists a number $n_{0}\in 
%TCIMACRO{\U{2115} }%
%BeginExpansion
\mathbb{N}
%EndExpansion
$ such that%
\begin{equation}
0<c^{n_{0}}\theta \left( u,w\right) <1  \label{2.2}
\end{equation}%
for all $u,w\in E$. From hypothesis of lemma, we can write%
\begin{eqnarray*}
\rho _{\theta }\left( u_{n+1},u_{n}\right) &\leq &c\rho _{\theta }\left(
u_{n},u_{n-1}\right) +k_{1}c^{n+1}+k_{2}c^{n} \\
&\leq &c\left( c\rho _{\theta }\left( u_{n-1},u_{n-2}\right)
+k_{1}c^{n}+k_{2}c^{n-1}\right) +k_{1}c^{n+1}+k_{2}c^{n} \\
&=&c^{2}\rho _{\theta }\left( u_{n-1},u_{n-2}\right) +2\left(
k_{1}c^{n+1}+k_{2}c^{n}\right) \\
&&\vdots \\
&\leq &c^{n}\rho _{\theta }\left( u_{1},u_{0}\right) +n\left(
k_{1}c^{n+1}+k_{2}c^{n}\right) .
\end{eqnarray*}%
Similarly, for all $k\geq 1$, we can write%
\begin{equation*}
\rho _{\theta }\left( u_{m+k},u_{n+k}\right) \leq c^{k}\rho _{\theta }\left(
u_{m},u_{n}\right) +k\left( k_{1}c^{m+k}+k_{2}c^{n+k}\right) .
\end{equation*}%
If $v\geq 2$, then from the definition of $b_{v}(\theta )$ metric space, we
get%
\begin{eqnarray*}
\rho _{\theta }\left( u_{n},u_{m}\right) &\leq &\theta \left(
u_{n},u_{m}\right) \left[ \rho _{\theta }\left( u_{n},u_{n+1}\right) +\rho
_{\theta }\left( u_{n+1},u_{n+2}\right) \right. \\
&&+\cdots +\rho _{\theta }\left( u_{n+v-3},u_{n+v-2}\right) +\rho _{\theta
}\left( u_{n+v-2},u_{n+n_{0}}\right) \\
&&\left. +\rho _{\theta }\left( u_{n+n_{0}},u_{m+n_{0}}\right) +\rho
_{\theta }\left( u_{m+n_{0}},u_{m}\right) \right] .
\end{eqnarray*}%
Then, we have%
\begin{eqnarray*}
\rho _{\theta }\left( u_{n},u_{m}\right) &\leq &\theta \left(
u_{n},u_{m}\right) \left[ \left( c^{n}+c^{n+1}+\cdots +c^{n+v-3}\right) \rho
_{\theta }\left( u_{0},u_{1}\right) \right. \\
&&+\left( k_{1}c+k_{2}\right) \left( nc^{n}+\left( n+1\right) c^{n+1}+\cdots
+\left( n+v-3\right) c^{n+v-3}\right) \\
&&+c^{n}\rho _{\theta }\left( u_{v-2},u_{n_{0}}\right) +nc^{n}\left(
k_{1}c^{v-2}+k_{2}c^{n_{0}}\right) \\
&&+c^{n_{0}}\rho _{\theta }\left( u_{n},u_{m}\right) +n_{0}c^{n_{0}}\left(
k_{1}c^{n}+k_{2}c^{m}\right) \\
&&+\left. c^{m}\rho _{\theta }\left( u_{n_{0}},u_{0}\right) +mc^{m}\left(
k_{1}c^{n_{0}}+k_{2}\right) \right] .
\end{eqnarray*}%
So, we obtain%
\begin{eqnarray*}
\rho _{\theta }\left( u_{n},u_{m}\right) \left( 1-c^{n_{0}}\theta \left(
u_{n},u_{m}\right) \right) &\leq &\theta \left( u_{n},u_{m}\right) \left[
\left( c^{n}+c^{n+1}+\cdots +c^{n+v-3}\right) \rho _{\theta }\left(
u_{0},u_{1}\right) \right. \\
&&+\left( k_{1}c+k_{2}\right) \left( nc^{n}+\left( n+1\right) c^{n+1}+\cdots
+\left( n+v-3\right) c^{n+v-3}\right) \\
&&+c^{n}\rho _{\theta }\left( u_{v-2},u_{n_{0}}\right) +nc^{n}\left(
k_{1}c^{v-2}+k_{2}c^{n_{0}}\right) +n_{0}c^{n_{0}}\left(
k_{1}c^{n}+k_{2}c^{m}\right) \\
&&+\left. c^{m}\rho _{\theta }\left( u_{n_{0}},u_{0}\right) +mc^{m}\left(
k_{1}c^{n_{0}}+k_{2}\right) \right] .
\end{eqnarray*}%
Since $\lim_{n\rightarrow \infty }nc^{n}=0$ and $1-c^{n_{0}}\theta \left(
u_{n},u_{m}\right) >0$, using (\ref{2.1}), we have $\rho _{\theta }\left(
u_{n},u_{m}\right) \rightarrow 0$ as $n,m\rightarrow \infty $. This means
that $\left\{ u_{n}\right\} $ is a Cauchy sequence. Since $b_{v}\left(
s\right) $ metric space is a $b_{2v}\left( s^{2}\right) $ metric space, if $%
v=1$, then $\left\{ u_{n}\right\} $ is Cauchy.
\end{proof}

Now we can give Banach fixed point theorem in complete $b_{v}\left( \theta
\right) $ metric space.

\begin{theorem}
\label{A3}Let $\left( E,\rho _{\theta }\right) $ be a complete $b_{v}(\theta
)$ metric space with a bounded function $\theta $ and $S:E\rightarrow E$ a
contraction mapping, i.e., there exists a constant $c\in \left[ 0,1\right) $
such that 
\begin{equation}
\rho _{\theta }\left( Su,Sw\right) \leq c\rho _{\theta }\left( u,w\right)
\label{2.3}
\end{equation}%
for all $u,w\in E$. Then $S$ has a unique fixed point.
\end{theorem}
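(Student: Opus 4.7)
The plan is to follow the classical Picard-iteration template, but to delegate the technical work to the two preparatory Lemmas \ref{A1} and \ref{A2} just stated. Fix an arbitrary $u_0\in E$ and define iterates $u_{n+1}=Su_n$. If $u_{n_0+1}=u_{n_0}$ for some $n_0$, then $u_{n_0}$ is already a fixed point and there is nothing to prove; so I may assume $u_n\neq u_{n+1}$ for every $n$. The contraction inequality \eqref{2.3} immediately yields $\rho_{\theta}(u_{n+1},u_n)\le c\,\rho_{\theta}(u_n,u_{n-1})$, which is exactly the hypothesis of Lemma \ref{A1}. I conclude that all $u_n$ are pairwise distinct, a property that will be essential in order to legitimately invoke the polygon-type axiom (3) of a $b_v(\theta)$ metric.

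Next, applying \eqref{2.3} once more gives
$$\rho_{\theta}(u_m,u_n)=\rho_{\theta}(Su_{m-1},Su_{n-1})\le c\,\rho_{\theta}(u_{m-1},u_{n-1}),$$
which is precisely inequality \eqref{2.1} of Lemma \ref{A2} with $k_1=k_2=0$. Since $\theta$ is bounded by hypothesis, Lemma \ref{A2} tells us that $\{u_n\}$ is Cauchy, and completeness of $E$ produces $b\in E$ with $u_n\to b$.

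To identify $b$ as a fixed point, I will apply axiom (3) to the chain $b,u_{n+1},u_{n+2},\ldots,u_{n+v},Sb$, obtaining
$$\rho_{\theta}(b,Sb)\le\theta(b,Sb)\bigl[\rho_{\theta}(b,u_{n+1})+\rho_{\theta}(u_{n+1},u_{n+2})+\cdots+\rho_{\theta}(u_{n+v-1},u_{n+v})+\rho_{\theta}(u_{n+v},Sb)\bigr].$$
Each middle term is bounded by $c^{n+k}\rho_{\theta}(u_0,u_1)\to 0$, the first term tends to $0$ by convergence, and the last term is at most $c\,\rho_{\theta}(u_{n+v-1},b)\to 0$ by \eqref{2.3}. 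Since $\theta(b,Sb)$ is a fixed finite constant, letting $n\to\infty$ forces $\rho_{\theta}(b,Sb)=0$, hence $Sb=b$. Uniqueness is then the routine one-liner: if $Sa=a$ and $Sb=b$ with $a\neq b$, \eqref{2.3} gives $\rho_{\theta}(a,b)\le c\,\rho_{\theta}(a,b)$ with $c<1$, forcing $\rho_{\theta}(a,b)=0$ and hence $a=b$.

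The main obstacle I foresee is ensuring that the polygon inequality may legitimately be applied, which requires the points $b,u_{n+1},\ldots,u_{n+v},Sb$ to be pairwise distinct. Lemma \ref{A1} secures distinctness among the iterates themselves. To exclude $b$ from coinciding with some $u_{n+k}$, note that if $b$ equalled an iterate, then the convergence $u_n\to b$ together with distinctness of the iterates would yield a contradiction; hence, by discarding finitely many terms, I may assume $b\notin\{u_{n+1},\ldots,u_{n+v}\}$ for all sufficiently large $n$. If $Sb$ happens to coincide with any of the listed points the conclusion is immediate, so the generic case is handled by axiom (3) and the argument above. Once these housekeeping issues are disposed of, the theorem follows directly from Lemmas \ref{A1}, \ref{A2} and the polygon inequality.
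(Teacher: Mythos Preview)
Your proof is correct and follows essentially the same route as the paper's own argument: both invoke Lemma~\ref{A1} to secure pairwise distinctness of the Picard iterates, feed the contraction inequality into Lemma~\ref{A2} (with $k_1=k_2=0$) to obtain a Cauchy sequence, and then use the $(v+1)$-term polygon inequality together with $\rho_\theta(u_{n+v},Sb)\le c\,\rho_\theta(u_{n+v-1},b)$ to identify the limit as a fixed point. If anything you are more careful than the paper, which applies axiom~(3) without explicitly verifying that the intermediate points are distinct from $b$ and $Sb$; your housekeeping paragraph addresses exactly this gap (note, though, that your phrase ``would yield a contradiction'' is slightly overstated---what is really true is that $b$ can coincide with at most one iterate, so for large $n$ it avoids $\{u_{n+1},\dots,u_{n+v}\}$, which is all you need).
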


\begin{proof}
Let $u_{0}\in E$ be an arbitrary initial point and let $\left\{
u_{n}\right\} $ be a sequence defined by $u_{n+1}=Su_{n}=S^{n+1}u_{0}$ and $%
u_{n}\neq u_{n+1}$ for all $n\geq 0$. It follows from Lemma \ref{A1} that $%
u_{n}\neq u_{m}$ for all $n,m\in 
%TCIMACRO{\U{2115} }%
%BeginExpansion
\mathbb{N}
%EndExpansion
$. Since $S$ is a contraction mapping, we can write%
\begin{equation*}
\rho _{\theta }\left( u_{n},u_{m}\right) =\rho _{\theta }\left(
Su_{n-1},Su_{m-1}\right) \leq c\rho _{\theta }\left( u_{n-1},u_{m-1}\right) .
\end{equation*}%
From Lemma \ref{A2}, we have $\left\{ u_{n}\right\} $ is a Cauchy sequence.
So, it follows from completeness of $E$ that there exists an element $u\in E$
such that $u_{n}\rightarrow u$. Now, we show that $u\in FixS$, i.e., $u=Su$. 
\begin{eqnarray*}
\rho _{\theta }\left( u,Su\right) &\leq &\theta \left( u,Su\right) \left[
\rho _{\theta }\left( u,u_{n+1}\right) +\rho _{\theta }\left(
u_{n+1},u_{n+2}\right) \right. \\
&&\left. +\cdots +\rho _{\theta }\left( u_{n+v-1},u_{n+v}\right) +\rho
_{\theta }\left( u_{n+v},Su\right) \right] \\
&=&\theta \left( u,Su\right) \left[ \rho _{\theta }\left( u,u_{n+1}\right)
+\rho _{\theta }\left( u_{n+1},u_{n+2}\right) \right. \\
&&\left. +\cdots +\rho _{\theta }\left( u_{n+v-1},u_{n+v}\right) +\rho
_{\theta }\left( Su_{n+v-1},Su\right) \right] \\
&\leq &\theta \left( u,Su\right) \left[ \rho _{\theta }\left(
u,u_{n+1}\right) +\rho _{\theta }\left( u_{n+1},u_{n+2}\right) \right. \\
&&\left. +\cdots +\rho _{\theta }\left( u_{n+v-1},u_{n+v}\right) +c\rho
_{\theta }\left( u_{n+v-1},u\right) \right] .
\end{eqnarray*}%
Since $\theta $ is a bounded function and $\left\{ u_{n}\right\} $ is Cauchy
with $u_{n}\rightarrow u$, we have $\rho _{\theta }\left( u,Su\right) =0$.
This means that $u\in FixS$. Next, we need to show that $u$ is a unique
fixed point. Let assume to the contrary that there exists another fixed
point $w$. Since 
\begin{equation*}
\rho _{\theta }\left( u,w\right) =\rho _{\theta }\left( Su,Sw\right) \leq
c\rho _{\theta }\left( u,w\right) <\rho _{\theta }\left( u,w\right) ,
\end{equation*}%
we get $u=w$ that is $u$ is the unique fixed point of $S$.
\end{proof}

\begin{remark}
In Theorem \ref{A3},

\begin{enumerate}
\item if we take the constant $v=1$ and the function $\theta \left(
u,w\right) =1$ for all $u,w\in E$, then we derive classical Banach fixed
point theorem in usual metric spaces.

\item if we take $\theta \left( u,w\right) =s$ for all $u,w\in E$ where $%
s\geq 1$, then we derive Theorem 2.1 of \cite{mr} in $b_{v}\left( s\right) $
metric spaces.

\item if $v=1$ and $\theta \left( u,w\right) =s$ for all $u,w\in E$, then we
derive Theorem 2.1 of \cite{dun} in $b$-metric spaces.

\item if $v=2$ and $\theta \left( u,w\right) =s$ for all $u,w\in E$, then we
derive Theorem 2.1 of \cite{mit} and so main theorem of \cite{geo} in
rectangular $b$-metric spaces.

\item if $\theta \left( u,w\right) =1$ for all $u,w\in E$, then we derive
main result of Branciari \cite{bri} in $v$-generalized metric spaces.
\end{enumerate}
\end{remark}

In literature, there exist various type of contraction mappings. Weakly
contractive mapping is one of this type of contractions which generalize
usual contractions. A mapping $S:E\rightarrow E$ is called weakly
contractive if there exists a continuous and nondecreasing function $\psi
\left( t\right) $ defined from $%
%TCIMACRO{\U{211d} }%
%BeginExpansion
\mathbb{R}
%EndExpansion
^{+}\cup \left\{ 0\right\} $ onto itself such that $\psi \left( 0\right)
=0,\psi \left( t\right) \rightarrow \infty $ as $t\rightarrow \infty $ and
for all $u,w\in E$%
\begin{equation}
\rho _{\theta }\left( Su,Sw\right) \leq \rho _{\theta }\left( u,w\right)
-\psi \left( \rho _{\theta }\left( u,w\right) \right) .  \label{m2}
\end{equation}

Now, we generalize Banach fixed point theorem for weakly contractive
mappings in $b_{v}\left( \theta \right) $ metric space.

\begin{theorem}
\label{AA}Let $E$ be a complete $b_{v}\left( \theta \right) $ metric space
and $S$ a weakly contractive mapping on $E$. Then $S$ has a unique fixed
point.
\end{theorem}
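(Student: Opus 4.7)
The plan is to follow the Picard-iteration strategy used in Theorem \ref{A3}, replacing the linear contraction bound by the weak contraction inequality (\ref{m2}) and controlling the $v$-step triangle inequality via the bounded $\theta$ as in Lemma \ref{A2} (which I am taking for granted to hold here, matching the standing hypothesis used in Theorem \ref{A3}). First I would fix $u_{0}\in E$, set $u_{n+1}=Su_{n}$, and, assuming no $u_{n}=u_{n+1}$ (otherwise we are done), put $\rho_{n}=\rho_{\theta}(u_{n},u_{n+1})$. Inequality (\ref{m2}) gives $\rho_{n}\leq \rho_{n-1}-\psi(\rho_{n-1})\leq \rho_{n-1}$, so $\{\rho_{n}\}$ decreases to some $L\geq 0$; passing to the limit and using continuity of $\psi$ forces $\psi(L)=0$, whence $L=0$. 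An argument patterned on Lemma \ref{A1} then shows that the iterates are pairwise distinct, so the rectangular inequality can legally be applied to them.

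Step~2, establishing that $\{u_{n}\}$ is Cauchy, is the main obstacle. I would argue by contradiction: if $\{u_{n}\}$ is not Cauchy, extract $\varepsilon>0$ and strictly increasing indices $m_{k}>n_{k}\geq k$ with $m_{k}-n_{k}$ large enough so that the $v$ intermediate iterates $u_{m_{k}-v},\ldots,u_{m_{k}-1}$ are distinct, $\rho_{\theta}(u_{n_{k}},u_{m_{k}})\geq \varepsilon$, and $m_{k}$ is minimal with this property. Using the $b_{v}(\theta)$-triangle inequality with those intermediates, the bound on $\theta$, and Step~1, one sandwiches $\rho_{\theta}(u_{n_{k}},u_{m_{k}})$ between quantities tending to $\varepsilon$, so $\rho_{\theta}(u_{n_{k}},u_{m_{k}})\to\varepsilon$; a parallel estimate gives $\rho_{\theta}(u_{n_{k}-1},u_{m_{k}-1})\to\varepsilon$. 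Applying (\ref{m2}) to the pair $(u_{n_{k}-1},u_{m_{k}-1})$,
\[
\rho_{\theta}(u_{n_{k}},u_{m_{k}})\leq \rho_{\theta}(u_{n_{k}-1},u_{m_{k}-1})-\psi\bigl(\rho_{\theta}(u_{n_{k}-1},u_{m_{k}-1})\bigr),
\]
and passing to the limit with the continuity of $\psi$ yields $\varepsilon\leq \varepsilon-\psi(\varepsilon)$, so $\psi(\varepsilon)=0$, hence $\varepsilon=0$: a contradiction. The delicate part is picking $n_{k},m_{k}$ so that all intermediate iterates are distinct (forced by the $b_{v}(\theta)$ definition) while still controlling the multiplier $\theta$, exactly the juggling performed inside Lemma \ref{A2}.

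Completeness then gives $u_{n}\to u^{\ast}$ for some $u^{\ast}\in E$. For the fixed-point identity I would use the rectangular inequality with intermediate points $u_{n+1},\ldots,u_{n+v}$,
\begin{align*}
\rho_{\theta}(u^{\ast},Su^{\ast}) &\leq \theta(u^{\ast},Su^{\ast})\bigl[\rho_{\theta}(u^{\ast},u_{n+1})+\rho_{\theta}(u_{n+1},u_{n+2})+\cdots\\
&\qquad +\rho_{\theta}(u_{n+v-1},u_{n+v})+\rho_{\theta}(Su_{n+v-1},Su^{\ast})\bigr],
\end{align*}
and bound the last term using (\ref{m2}) by $\rho_{\theta}(u_{n+v-1},u^{\ast})$. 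Letting $n\to\infty$ and invoking the boundedness of $\theta$ together with $u_{n}\to u^{\ast}$ and $\rho_{n}\to 0$ forces $\rho_{\theta}(u^{\ast},Su^{\ast})=0$, so $Su^{\ast}=u^{\ast}$. Uniqueness is then immediate: any second fixed point $w^{\ast}$ satisfies $\rho_{\theta}(u^{\ast},w^{\ast})\leq \rho_{\theta}(u^{\ast},w^{\ast})-\psi(\rho_{\theta}(u^{\ast},w^{\ast}))$, forcing $\rho_{\theta}(u^{\ast},w^{\ast})=0$.
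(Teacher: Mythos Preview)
Your argument for Step~1 (consecutive distances tend to zero), the fixed-point identity, and uniqueness all match the paper's proof essentially line for line. The divergence is entirely in Step~2, the Cauchy property, and here your proposed subsequence argument has a genuine gap that the paper's route avoids.

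The sandwich you invoke does not close in a $b_{v}(\theta)$ space. With $\theta$ bounded by some $M\geq 1$, minimality of $m_{k}$ together with the $v$-term inequality gives only
\[
\varepsilon \leq \rho_{\theta}(u_{n_{k}},u_{m_{k}}) \leq M\bigl[\rho_{\theta}(u_{n_{k}},u_{m_{k}-v})+\rho_{n_{k,1}}+\cdots\bigr]\leq M\varepsilon+o(1),
\]
so the limit (if it even exists) is trapped in $[\varepsilon,M\varepsilon]$, not forced to equal $\varepsilon$. The parallel claim $\rho_{\theta}(u_{n_{k}-1},u_{m_{k}-1})\to\varepsilon$ fails for the same reason: in an ordinary metric space one writes $|\rho(u_{n_{k}-1},u_{m_{k}-1})-\rho(u_{n_{k}},u_{m_{k}})|\leq \rho_{n_{k}-1}+\rho_{m_{k}-1}\to0$, but that two-sided estimate is exactly what the multiplicative coefficient in the $b_{v}(\theta)$ inequality destroys. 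Without knowing that the two sequences converge to the \emph{same} limit, the inequality $\rho_{\theta}(u_{n_{k}},u_{m_{k}})\leq \rho_{\theta}(u_{n_{k}-1},u_{m_{k}-1})-\psi(\rho_{\theta}(u_{n_{k}-1},u_{m_{k}-1}))$ does not yield $\varepsilon\leq\varepsilon-\psi(\varepsilon)$.

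The paper sidesteps all of this by never touching the rectangular inequality in the Cauchy step. It fixes $p\in\mathbb{N}$, sets $\alpha_{n}=\rho_{\theta}(u_{n},u_{n+p})$, and applies (\ref{m2}) directly to the pair $(u_{n},u_{n+p})$ to get $\alpha_{n+1}\leq\alpha_{n}-\psi(\alpha_{n})$. Thus $\{\alpha_{n}\}$ is nonincreasing with limit $\alpha\geq 0$; if $\alpha>0$ then $\psi(\alpha_{n})\geq\psi(\alpha)>0$ for all $n$, and telescoping yields $\alpha_{N}\leq\alpha_{0}-N\psi(\alpha)$, a contradiction for large $N$. Hence $\rho_{\theta}(u_{n},u_{n+p})\to 0$, and the paper declares $\{u_{n}\}$ Cauchy. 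Note that this argument uses only the weak-contraction inequality, not the $b_{v}(\theta)$ structure or the bound on $\theta$; that is precisely why it is immune to the multiplier problem that breaks your Step~2.
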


\begin{proof}
Let $u_{0}\in E$ be an arbitrary initial point. Define sequence $\left\{
u_{n}\right\} $ by $u_{1}=Su_{0}$, $u_{2}=Su_{1}=S^{2}u_{0},\ldots
,u_{n+1}=Su_{n}=S^{n}u_{0}$. If $u_{n}=u_{n+1}$ for all $n\in 
%TCIMACRO{\U{2115} }%
%BeginExpansion
\mathbb{N}
%EndExpansion
$ where $%
%TCIMACRO{\U{2115} }%
%BeginExpansion
\mathbb{N}
%EndExpansion
$ is the set of positive integer, then proof is trivial. So, let assume that 
$u_{n}\neq u_{n+1}$ for all $n$. Moreover, the case that $u_{n}\neq u_{m}$
for all different $n$ and $m$ can be easily proved. From (\ref{m2}), we can
write%
\begin{eqnarray*}
\rho _{\theta }\left( u_{n+1},u_{n+p+1}\right) &=&\rho _{\theta }\left(
Su_{n},Su_{n+p}\right) \\
&\leq &\rho _{\theta }\left( u_{n},u_{n+p}\right) -\psi \left( \rho _{\theta
}\left( u_{n},u_{n+p}\right) \right)
\end{eqnarray*}%
for all $n,p\in 
%TCIMACRO{\U{2115} }%
%BeginExpansion
\mathbb{N}
%EndExpansion
$. Let $\alpha _{n}=\rho _{\theta }\left( u_{n},u_{n+p}\right) $. Since $%
\psi $ is nondecreasing, we have%
\begin{equation}
\alpha _{n+1}\leq \alpha _{n}-\psi \left( \alpha _{n}\right) \leq \alpha
_{n}.  \label{m1}
\end{equation}%
Thus, the sequence $\left\{ \alpha _{n}\right\} $ has a limit $\alpha \geq 0$%
.\ Now we should show that $\alpha =0$. Assume to the contrary that $\alpha
>0$. Using (\ref{m1}), we have%
\begin{equation*}
\psi \left( \alpha _{n}\right) \geq \psi \left( \alpha \right) >0.
\end{equation*}%
So, we get%
\begin{equation*}
\alpha _{n+1}\leq \alpha _{n}-\psi \left( \alpha \right) .
\end{equation*}%
Hence, we obtain $\alpha _{N+m}\leq \alpha _{m}-N\psi \left( \alpha \right) $
which is a contradiction for large enough $N$. This proves that $\alpha =0$.
This means that $\left\{ u_{n}\right\} $ is Cauchy. Completeness of $E$
implies that there exists a point $u\in E$ such that $u_{n}\rightarrow u$.
Now, we show that $u$ is a fixed point of $S$. Using (\ref{m2}) and
definition of $\rho _{\theta }$, we get%
\begin{eqnarray*}
\rho _{\theta }\left( u,Su\right) &\leq &\theta \left( u,Su\right) \left[
\rho _{\theta }\left( u,u_{n+1}\right) +\rho _{\theta }\left(
u_{n+1},u_{n+2}\right) \right. \\
&&\left. +\ldots +\rho _{\theta }\left( u_{n+v-1},u_{n+v}\right) +\rho
_{\theta }\left( u_{n+v},Su\right) \right] \\
&=&\theta \left( u,Su\right) \left[ \rho _{\theta }\left( u,u_{n+1}\right)
+\rho _{\theta }\left( u_{n+1},u_{n+2}\right) \right. \\
&&\left. +\ldots +\rho _{\theta }\left( u_{n+v-1},u_{n+v}\right) +\rho
_{\theta }\left( Su_{n+v-1},Su\right) \right] \\
&\leq &\theta \left( u,Su\right) \left[ \rho _{\theta }\left(
u,u_{n+1}\right) +\rho _{\theta }\left( u_{n+1},u_{n+2}\right) +\ldots
+\right. \\
&&\left. \rho _{\theta }\left( u_{n+v-1},u_{n+v}\right) +\rho _{\theta
}\left( u_{n+v-1},u\right) -\psi \left( \rho _{\theta }\left(
u_{n+v-1},u\right) \right) \right] .
\end{eqnarray*}%
Since $\rho _{\theta }\left( u_{n},u_{n+p}\right) \rightarrow 0$ and $%
u_{n}\rightarrow u$ as $n\rightarrow \infty $ and $\psi \left( 0\right) =0$,
we have $u$ is a fixed point of $S$.

To prove the uniqueness of fixed point, we can assume that there exist one
more fixed point $w$. Since $S$ is a weakly contractive mapping, we have%
\begin{equation*}
\rho _{\theta }\left( u,w\right) =\rho _{\theta }\left( Su,Sw\right) \leq
\rho _{\theta }\left( u,w\right) -\psi \left( \rho _{\theta }\left(
u,w\right) \right) <\rho _{\theta }\left( u,w\right) .
\end{equation*}
So $u=w$ This finishes the proof.
\end{proof}

\begin{remark}
In Theorem \ref{AA},
\end{remark}

1 if we take the constant $v=1$, the function $\theta \left( u,w\right) =1$
for all $u,w\in E$ and $\psi (t)=ct$, then we derive classical Banach fixed
point theorem.

2.if we take $\psi (t)=ct$ and $\theta \left( u,w\right) =s$ where $s\in %
\left[ 1,\infty \right) $, then we derive Theorem 2.1 of \cite{mr}

3.if $v=1,$ $\theta \left( u,w\right) =s$ and $\psi (t)=ct$, then we derive
Theorem 2.1 of \cite{dun}.

4.if $v=2$, $\theta \left( u,w\right) =s$ and $\psi (t)=ct$, then we derive
Theorem 2.1 of \cite{mit} and so main theorem of \cite{geo}.

5.if $v=1$ and $\theta \left( u,w\right) =s$, then we derive main theorem of 
\cite{rho}.

Now, we give Reich fixed point theorem.

\begin{theorem}
\label{A4}Let $\left( E,\rho _{\theta }\right) $ be a complete $b_{v}(\theta
)$ metric space with a bounded function $\theta $ and $S:E\rightarrow E$ a
mapping satisfying:%
\begin{equation}
\rho _{\theta }\left( Su,Sw\right) \leq \alpha \rho _{\theta }\left(
u,w\right) +\beta \rho _{\theta }\left( u,Su\right) +\gamma \rho _{\theta
}\left( w,Sw\right)  \label{2.4}
\end{equation}%
for all $u,w\in E$ where $\alpha ,\beta ,\gamma $ are nonnegative constants
with $\alpha +\beta +\gamma <1$ and $\Gamma _{1}<\frac{1}{\Gamma _{2}}$
where $\Gamma _{1}=\min \left\{ \beta ,\gamma \right\} $ and $\Gamma
_{2}=\max \left\{ \theta \left( u,Su\right) ,\theta \left( Su,u\right)
\right\} $. Then $S$ has a unique fixed point. Moreover, sequence $\left\{
u_{n}\right\} $ defined by $u_{n}=Su_{n-1}$ converges strongly to the unique
fixed point of $S$.
\end{theorem}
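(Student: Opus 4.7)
The plan is to mimic the Picard-iteration strategy used for Theorem \ref{A3}, with two extra technical wrinkles imposed by the Reich-type three-term bound. Starting from an arbitrary $u_0\in E$, I would define $u_{n+1}=Su_n$ and, discarding the trivial case $u_n=u_{n+1}$ for some $n$, apply (\ref{2.4}) with $u=u_n,\ w=u_{n-1}$ to get
\begin{equation*}
\rho_\theta(u_{n+1},u_n)\leq \alpha\rho_\theta(u_n,u_{n-1})+\beta\rho_\theta(u_n,u_{n+1})+\gamma\rho_\theta(u_{n-1},u_n).
\end{equation*}
Absorbing the $\beta$-term on the left gives $\rho_\theta(u_{n+1},u_n)\leq c\,\rho_\theta(u_n,u_{n-1})$ with $c=\tfrac{\alpha+\gamma}{1-\beta}\in[0,1)$ since $\alpha+\beta+\gamma<1$. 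This is exactly the hypothesis of Lemma \ref{A1}, so all iterates $u_n$ are pairwise distinct.

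Next I would prepare the sequence for Lemma \ref{A2}. Applying (\ref{2.4}) with $u=u_{m-1},\ w=u_{n-1}$ and then using the already-proved geometric estimate $\rho_\theta(u_{k-1},u_k)\leq c^{k-1}\rho_\theta(u_0,u_1)$ yields
\begin{equation*}
\rho_\theta(u_m,u_n)\leq \alpha\rho_\theta(u_{m-1},u_{n-1})+k_1c^{m}+k_2c^{n},
\end{equation*}
with $k_1,k_2$ proportional to $\rho_\theta(u_0,u_1)$. Since $\alpha<1$ and $\theta$ is bounded, Lemma \ref{A2} applies (replacing $c$ in its statement by $\alpha$) and $\{u_n\}$ is Cauchy. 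Completeness then produces a limit $u\in E$ with $u_n\to u$.

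The key step, and the one I expect to be most delicate, is showing $Su=u$. Following the template of Theorem \ref{A3}, I would chain through $v$ intermediate iterates:
\begin{align*}
\rho_\theta(u,Su)&\leq \theta(u,Su)\bigl[\rho_\theta(u,u_{n+1})+\rho_\theta(u_{n+1},u_{n+2})+\cdots+\rho_\theta(u_{n+v-1},u_{n+v})\\
&\qquad +\rho_\theta(Su_{n+v-1},Su)\bigr],
\end{align*}
and then invoke (\ref{2.4}) on the last term to obtain
\begin{equation*}
\rho_\theta(Su_{n+v-1},Su)\leq \alpha\rho_\theta(u_{n+v-1},u)+\beta\rho_\theta(u_{n+v-1},u_{n+v})+\gamma\rho_\theta(u,Su).
\end{equation*}
The self-referential $\gamma\rho_\theta(u,Su)$ term is the obstacle: it must be absorbed into the left side, which requires $1-\gamma\,\theta(u,Su)>0$. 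This is exactly where the hypothesis $\Gamma_1<1/\Gamma_2$ enters. If the minimum of $\{\beta,\gamma\}$ is $\gamma$, I absorb via $\gamma$; otherwise, I repeat the chain backwards (using symmetry of $\rho_\theta$ and the $b_v(\theta)$ inequality in the form bounding $\rho_\theta(Su,u)$ through $Su,u_{n+v},\dots,u_{n+1},u$) so that the self-referential coefficient becomes $\beta\,\theta(Su,u)$, and absorb via $\beta$. In either case the condition $\Gamma_1\Gamma_2<1$ makes the coefficient strictly positive, and letting $n\to\infty$ forces $\rho_\theta(u,Su)=0$, hence $Su=u$.

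Uniqueness is routine: if $Sw=w$ as well, (\ref{2.4}) with $u,w$ gives $\rho_\theta(u,w)\leq \alpha\rho_\theta(u,w)+\beta\rho_\theta(u,u)+\gamma\rho_\theta(w,w)$. Since $\rho_\theta$ vanishes on the diagonal in a $b_v(\theta)$ metric, this becomes $(1-\alpha)\rho_\theta(u,w)\leq 0$, so $u=w$. Strong convergence of $\{u_n\}$ to the fixed point is already established by construction.
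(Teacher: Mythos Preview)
Your argument follows the paper's proof essentially step for step: the same derivation of the contraction ratio $c=\frac{\alpha+\gamma}{1-\beta}$, the same appeals to Lemmas~\ref{A1} and~\ref{A2}, the same $v$-term chaining to show $Su=u$, and the same uniqueness computation. One small slip: when you invoke Lemma~\ref{A2} ``replacing $c$ in its statement by $\alpha$'', the error terms $k_1c^{m}+k_2c^{n}$ carry powers of $c=\frac{\alpha+\gamma}{1-\beta}$, not of $\alpha$; since $\alpha\le c<1$, the clean fix is simply to weaken the recursive coefficient from $\alpha$ to $c$ and apply Lemma~\ref{A2} with the single parameter $c$ (the paper glosses over the very same point). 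Your case split---chaining from $u$ to $Su$ when $\gamma=\Gamma_1$ and from $Su$ to $u$ when $\beta=\Gamma_1$, so that the self-referential coefficient is always $\Gamma_1\cdot\theta\le\Gamma_1\Gamma_2<1$---is actually more careful than the paper's own proof, which writes out only the $\gamma$-absorption and asserts $1-\gamma\,\theta(u,Su)>0$ directly from $\Gamma_1<1/\Gamma_2$ without spelling out the reversed chain needed when $\beta<\gamma$.
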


\begin{proof}
Let $\left\{ u_{n}\right\} $ be a sequence defined by $%
u_{n+1}=Su_{n}=S^{n+1}u_{0}$ where $u_{0}\in E$ is an arbitrary initial
point. If $u_{n}=u_{n+1}$ for all $n\in 
%TCIMACRO{\U{2115} }%
%BeginExpansion
\mathbb{N}
%EndExpansion
$, it is easy to see that $u_{0}$ is a fixed point of $S$. Now, we assume
that $u_{n}\neq u_{n+1}$ for all $n$. From (\ref{2.4}) and definition of $%
\left\{ u_{n}\right\} $, we have%
\begin{eqnarray*}
\rho _{\theta }\left( u_{n+1},u_{n}\right) &=&\rho _{\theta }\left(
Su_{n},Su_{n-1}\right) \\
&\leq &\alpha \rho _{\theta }\left( u_{n},u_{n-1}\right) +\beta \rho
_{\theta }\left( u_{n},Su_{n}\right) +\gamma \rho _{\theta }\left(
u_{n-1},Su_{n-1}\right) \\
&=&\alpha \rho _{\theta }\left( u_{n},u_{n-1}\right) +\beta \rho _{\theta
}\left( u_{n},u_{n+1}\right) +\gamma \rho _{\theta }\left(
u_{n-1},u_{n}\right) .
\end{eqnarray*}%
Then, we get%
\begin{eqnarray*}
\rho _{\theta }\left( u_{n+1},u_{n}\right) &\leq &\frac{\alpha +\gamma }{%
1-\beta }\rho _{\theta }\left( u_{n},u_{n-1}\right) \\
&\leq &\left( \frac{\alpha +\gamma }{1-\beta }\right) ^{n}\rho _{\theta
}\left( u_{1},u_{0}\right) .
\end{eqnarray*}%
Since $\alpha +\beta +\gamma <1$, then it is clear that $0\leq \frac{\alpha
+\gamma }{1-\beta }<1$. So, we obtain%
\begin{equation}
\lim_{n\rightarrow \infty }\rho _{\theta }\left( u_{n+1},u_{n}\right) =0.
\label{2.5}
\end{equation}%
Also, since we assume that $u_{n}\neq u_{n+1}$ for all $n$ and $\rho
_{\theta }\left( u_{n+1},u_{n}\right) \leq \frac{\alpha +\gamma }{1-\beta }%
\rho _{\theta }\left( u_{n},u_{n-1}\right) $, then it follows from Lemma \ref%
{A1} that $u_{n}\neq u_{m}$ for all $n,m\in 
%TCIMACRO{\U{2115} }%
%BeginExpansion
\mathbb{N}
%EndExpansion
$. So, we have%
\begin{eqnarray*}
\rho _{\theta }\left( u_{n},u_{m}\right) &=&\rho _{\theta }\left(
Su_{n-1},Su_{m-1}\right) \\
&\leq &\alpha \rho _{\theta }\left( u_{n-1},u_{m-1}\right) +\beta \rho
_{\theta }\left( u_{n-1},Su_{n-1}\right) +\gamma \beta \rho _{\theta }\left(
u_{m-1},Su_{m-1}\right) \\
&=&\alpha \rho _{\theta }\left( u_{n-1},u_{m-1}\right) +\beta \rho _{\theta
}\left( u_{n-1},u_{n}\right) +\gamma \beta \rho _{\theta }\left(
u_{m-1},u_{m}\right) \\
&\leq &\alpha \rho _{\theta }\left( u_{n-1},u_{m-1}\right) +\left( \beta
\left( \frac{\alpha +\gamma }{1-\beta }\right) ^{n-1}+\gamma \left( \frac{%
\alpha +\gamma }{1-\beta }\right) ^{m-1}\right) \rho _{\theta }\left(
u_{1},u_{0}\right) \text{.}
\end{eqnarray*}%
It follows from Lemma \ref{A2} that $\left\{ u_{n}\right\} $ is a Cauchy
sequence. So, from the completeness of $E$, we obtain that there exists a
point $u\in E$ such that $u_{n}\rightarrow u$. Now, we show that $u$ is a
fixed point of $S$, i.e., $\rho _{\theta }\left( u,Su\right) =0$. Since%
\begin{eqnarray*}
\rho _{\theta }\left( u,Su\right) &\leq &\theta \left( u,Su\right) \left[
\rho _{\theta }\left( u,u_{n+1}\right) +\rho _{\theta }\left(
u_{n+1},u_{n+2}\right) +\cdots +\rho _{\theta }\left(
u_{n+v-1},u_{n+v}\right) +\rho _{\theta }\left( u_{n+v},Su\right) \right] \\
&\leq &\theta \left( u,Su\right) \left[ \rho _{\theta }\left(
u,u_{n+1}\right) +\rho _{\theta }\left( u_{n+1},u_{n+2}\right) +\cdots +\rho
_{\theta }\left( u_{n+v-1},u_{n+v}\right) +\rho _{\theta }\left(
Su_{n+v-1},Su\right) \right] \\
&\leq &\theta \left( u,Su\right) \left[ \rho _{\theta }\left(
u,u_{n+1}\right) +\rho _{\theta }\left( u_{n+1},u_{n+2}\right) +\cdots +\rho
_{\theta }\left( u_{n+v-1},u_{n+v}\right) \right. \\
&&\left. +\alpha \rho _{\theta }\left( u_{n+v-1},u\right) +\beta \rho
_{\theta }\left( u_{n+v-1},u_{n+v}\right) +\gamma \rho _{\theta }\left(
u,Su\right) \right] ,
\end{eqnarray*}%
we have%
\begin{eqnarray*}
\left( 1-\gamma \theta \left( u,Su\right) \right) \rho _{\theta }\left(
u,Su\right) &\leq &\theta \left( u,Su\right) \left[ \rho _{\theta }\left(
u,u_{n+1}\right) +\rho _{\theta }\left( u_{n+1},u_{n+2}\right) +\cdots
\right. \\
&&+\left. \rho _{\theta }\left( u_{n+v-1},u_{n+v}\right) +\alpha \rho
_{\theta }\left( u_{n+v-1},u\right) +\beta \rho _{\theta }\left(
u_{n+v-1},u_{n+v}\right) \right] .
\end{eqnarray*}%
Since $\Gamma _{1}<\frac{1}{\Gamma _{2}}$, we get $\left( 1-\gamma \theta
\left( u,Su\right) \right) \in \left[ 0,1\right) $. So, it follows from (\ref%
{2.5}) and convergence of $\left\{ u_{n}\right\} $\ that $\rho _{\theta
}\left( u,Su\right) =0$. This means that $u$ is a fixed point of $S.$ Now,
we need to show that $u$ is a unique fixed point. Let assume that there
exist another fixed point $v$. Then, we have 
\begin{eqnarray*}
\rho _{\theta }\left( u,v\right) &=&\rho _{\theta }\left( Su,Sv\right) \leq
\alpha \rho _{\theta }\left( u,v\right) +\beta \rho _{\theta }\left(
u,Su\right) +\delta \rho _{\theta }\left( v,Sv\right) \\
&=&\alpha \rho _{\theta }\left( u,v\right) .
\end{eqnarray*}%
Since $\alpha <1$, we obtain that $\rho _{\theta }\left( u,v\right) =0,$
i.e., $u$ is the unique fixed point of $S$.
\end{proof}

\begin{remark}
In Theorem \ref{A4}, if we take $\theta \left( u,w\right) =s$ for all $%
u,w\in E$ where $s\geq 1$, then we derive Theorem 2.4 of \cite{mr}.
\end{remark}

In Reich fixed point theorem, \ if we get $\alpha =0,$ then we obtain
following generalized Kannan fixed point theorem in $b_{v}\left( \theta
\right) $ metric spaces.

\begin{theorem}
\label{A5}Let $E$ be a complete $b_{v}\left( \theta \right) $ metric space
and $S$ a mapping on $E$ satisfying:%
\begin{equation*}
\rho _{\theta }\left( Su,Sw\right) \leq \beta \rho _{\theta }\left(
u,Su\right) +\gamma \rho _{\theta }\left( w,Sw\right)
\end{equation*}%
for all $u,w\in E$ where $\beta $ and $\gamma $ are nonnegative constants
with $\beta +\gamma <1$ and $\Gamma _{1}<\frac{1}{\Gamma _{2}}$ where $%
\Gamma _{1}=\min \left\{ \beta ,\gamma \right\} $ and $\Gamma _{2}=\max
\left\{ \theta \left( u,Su\right) ,\theta \left( Su,u\right) \right\} $.
Then $S$ has a unique fixed point.
\end{theorem}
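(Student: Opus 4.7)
The strategy is to specialize Theorem \ref{A4} (Reich fixed point theorem) by setting $\alpha = 0$. Under this specialization the growth condition $\alpha + \beta + \gamma < 1$ reduces to $\beta + \gamma < 1$, while the hypothesis $\Gamma_1 < 1/\Gamma_2$ is identical to the one in Theorem \ref{A4}. Consequently the whole existence argument of Theorem \ref{A4} transfers essentially verbatim, and only the uniqueness step simplifies further.

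Concretely, I would first build the Picard sequence $u_{n+1} = Su_n$ starting from an arbitrary $u_0 \in E$. The Kannan-type inequality with $u := u_n$, $w := u_{n-1}$ gives $\rho_\theta(u_{n+1}, u_n) \leq \beta \rho_\theta(u_n, u_{n+1}) + \gamma \rho_\theta(u_{n-1}, u_n)$, which rearranges to $\rho_\theta(u_{n+1}, u_n) \leq c\, \rho_\theta(u_n, u_{n-1})$ with $c = \gamma/(1-\beta) \in [0,1)$. Lemma \ref{A1} then guarantees that the iterates are pairwise distinct, and Lemma \ref{A2} (applied with the estimate from the Reich proof specialized to $\alpha = 0$, namely with $k_1 \sim \beta c^{n-1}\rho_\theta(u_1,u_0)$ and $k_2 \sim \gamma c^{m-1}\rho_\theta(u_1,u_0)$) shows that $\{u_n\}$ is Cauchy; completeness of $E$ furnishes a limit $u \in E$.

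To show $Su = u$ I would reproduce the telescoping estimate of Theorem \ref{A4}: expanding $\rho_\theta(u, Su)$ via the $b_v(\theta)$ polygon inequality and applying the Kannan contraction to the last term yields a factor $(1 - \gamma\, \theta(u, Su))$ on the left, which is strictly positive by $\Gamma_1 < 1/\Gamma_2$, while the right-hand side tends to zero by (\ref{2.5}) and $u_n \to u$. For uniqueness, if $u$ and $v$ are two fixed points, then directly
\[
\rho_\theta(u, v) = \rho_\theta(Su, Sv) \leq \beta \rho_\theta(u, Su) + \gamma \rho_\theta(v, Sv) = 0,
\]
so $u = v$; in contrast with Theorem \ref{A4}, no auxiliary bound on the coefficients is needed because the self-distance terms vanish automatically at a fixed point.

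The only delicate point is again the positivity of $1 - \gamma\, \theta(u, Su)$, handled exactly as in Theorem \ref{A4} via the hypothesis $\Gamma_1\Gamma_2 < 1$ (one may swap the roles of $u$ and $w$ in the Kannan estimate to arrange that the coefficient in front of $\rho_\theta(u,Su)$ in the telescoping inequality is $\min\{\beta,\gamma\} = \Gamma_1$). Apart from that, Theorem \ref{A5} is a direct corollary of Theorem \ref{A4}, and in a leaner presentation one could obtain it by simply quoting Theorem \ref{A4} with $\alpha = 0$.
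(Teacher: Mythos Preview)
Your proposal is correct and matches the paper's own treatment: the paper does not give a separate proof of Theorem \ref{A5} but simply observes, immediately before its statement, that it is obtained from the Reich fixed point theorem (Theorem \ref{A4}) by taking $\alpha = 0$. Your write-up supplies more detail than the paper itself, but the approach is identical; note only that Theorem \ref{A4} assumes $\theta$ is bounded (needed for Lemma \ref{A2}), so that hypothesis is implicitly inherited here even though it is not restated in Theorem \ref{A5}.
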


\begin{remark}
In Theorem \ref{A5},

\begin{enumerate}
\item if $v=1$ and $\theta \left( u,w\right) =1$ for all $u,w\in E$ where $%
s\geq 1$, then we obtain Kannan fixed point theorem \cite{kan} in complete
usual metric spaces.

\item if $v=2$ and $\theta \left( u,w\right) =s$ for all $u,w\in E$ where $%
s\geq 1$, then we derive Theorem 2.4 of \ \cite{geo}.

\item if $v=2$ and $\theta \left( u,w\right) =1$ for all $u,w\in E$ where $%
s\geq 1$, then we obtain main theorem of \cite{das} without the assumption
of orbitally completeness of the space and the main theorem of \cite{ak}.
\end{enumerate}
\end{remark}

\end{document}